\newtheorem{theorem}{Theorem}[section]
\newtheorem{definition}{Definition}[section]
\newtheorem{corollary}{Corollary}[section]
\newtheorem{remark}{Remark}[section]
\numberwithin{equation}{section}
\begin{document}

\title{Infinite-thin shock layer solutions for stationary compressible conical flows and numerical results via Fourier spectral method}

\author{Aifang Qu}
\author{Xueying Su}
\author{Hairong Yuan}

\address[A. Qu]{Department of Mathematics, Shanghai Normal University,
Shanghai,  200234,  China}
\email{\tt afqu@shnu.edu.cn}

\address[X. Su]{Center for Partial Differential Equations, School of Mathematical Sciences,
East China Normal University, Shanghai
200241, China}
\email{\tt suxueying789@163.com}

\address[H. Yuan]{School of Mathematical Sciences and Shanghai Key Laboratory of Pure Mathematics and Mathematical Practice,
East China Normal University, Shanghai
200241, China}
\email{\tt hryuan@math.ecnu.edu.cn}

\maketitle
\allowbreak
\begin{abstract}
    We  consider the problem of uniform steady supersonic Euler flows passing a straight conical body with attack angles, and study Radon measure solutions describing the infinite-thin shock layers, particularly for the Chaplygin gas and limiting hypersonic flows. As a byproduct, we obtain the generalized Newton-Busemann pressure laws.  To construct the Radon measure solutions containing weighted Dirac measures supported on the edge of the cone on the 2-sphere, we  derive some highly singular and non-linear ordinary differential equations (ODE). A numerical algorithm based on the combination of Fourier spectral method and Newton's method is developed to solve the physically desired nonnegative and periodic solutions of the ODE. The numerical simulations for different attack angles exhibit proper theoretical properties and excellent accuracy, thus would be useful for engineering of hypersonic aerodynamics.
\end{abstract}

\newcommand\keywords[1]{\textbf{Keywords}: #1}
\keywords{Infinite-thin shock layer; Conical flow; Chaplygin gas; Radon measure solution; Singular ODE; Fourier spectral method. }

\section{Introduction}
Supersonic conical flow is an important prototypical problem in mathematical gas dynamics  due to its wide applications and tremendous challenges. It is observed that if the Mach number of the upcoming flow is large, shock layer (i.e., the region between the shock-front and the cone) will be extremely thin, and density would blow up to infinity. Qu and Yuan \cite{1} studied supersonic flow of polytropic gas passing a straight cone. For the particular case without attack angle, they constructed a Radon measure solution to the hypersonic-limit problem, with density containing a Dirac measure supported on the surface of the cone, and then proved the celebrated Newton sine-squared law of hypersonic aerodynamics. For the general case, it remains open even for finding a numerical solution, since the resultant ODE are quite singular and highly nonlinear, while one needs to establish nonnegative periodic solutions.
In this work, we consider infinite-thin shock layer solutions to steady compressible flow passing the cone with attack angles, especially for Chaplygin gas, and then propose a numerical method to solve the derived ODE via Fourier spectral method. 
For more background and introductions on conical flows, we refer to \cite{2,3,4,6,7} and references therein.

In the rest of this section, we explain the notations, formulate the problem of supersonic conical flows, and present the concept of Radon measure solution to the problem. Section \ref{sec2} is devoted to deriving the ODE governing the weights of the Dirac measures, which describe the strength of the infinite-thin shock layers. In Section \ref{sec3} we consider the special case of Chaplygin gas. In the final Section \ref{sec4}, we exhibit the Fourier spectral method of solving the nonnegative periodic solutions and present many numerical results demonstrating its efficiency.

\subsection{Equations of conical flow}In this paper, we adopt the sphere coordinates as used in \cite{1}, see Figure \ref{fig1}. The sphere coordinates of the standard Euclidean space $\mathbb{R}^3$ $(x=(x^1,x^2,x^3))$ is $(\theta,\phi,r)$, given by
\begin{equation*}
    x^1=r\cos\theta,\quad x^2=r\sin\theta\cos\phi,\quad x^3=r\sin\theta\sin\phi,\quad  \theta\in[0,\pi],\quad \phi\in[-\pi,\pi].
\end{equation*}
The natural frame under sphere coordinates is denoted as $(\Vec{\partial_\theta},\Vec{\partial_\phi},\Vec{\partial_r}):$
\begin{equation*}
    \Vec{\partial_\theta}=\frac{\partial x}{\partial \theta},\quad \Vec{\partial_\phi}=\frac{\partial x}{\partial \phi},\quad \Vec{\partial_r}=\frac{\partial x}{\partial r}.
\end{equation*}
The velocity field $V\in\mathbb{R}^3$ of the flow is written as $V=u+w\vec{\partial_r}=u^\theta\vec{\partial_\theta}+u^\phi\vec{\partial_\phi}+w\vec{\partial_r}$, with $u^\theta,~u^\phi,~w$ the component along $\Vec{\partial_\theta},~\Vec{\partial_\phi}$ and  $\Vec{\partial_r}$ respectively. Thus, the tangential component of the velocity is $u=u^\theta\vec{\partial_\theta}+u^\phi\vec{\partial_\phi}$, and $w\partial_r$ is the radial component. We set $\rho$ be the density of mass of the gas, $E$ the total ethalpy per unit mass of gas, and $p$ the pressure. We also record here  the divergence and gradient operator in $\mathbb{R}^3$ and on the unit sphere $S^2\subset\mathbb{R}^3$ under sphere coordinates as follows:
\begin{align*}
    & \mathrm{Div}\,V=\frac{1}{\sqrt{G}}\Big(\partial_\theta(\sqrt{G} u^\theta)+\partial_\phi(\sqrt{G} u^\phi)+\partial_r(\sqrt{G} w)\Big),\quad \sqrt{G}\doteq r^2\sin\theta,\\
    & \mathrm{div}\,u=\frac{1}{\sqrt{g}}\Big(\partial_\theta(\sqrt{g}u^\theta)+\partial_\phi(\sqrt{g}u^\phi\Big),\quad \sqrt{g}\doteq\sin\theta,\\
    & \mathrm{Grad}\,p=\frac{1}{r^2}\partial_\theta p\cdot\Vec{\partial_\theta}+\frac{\sin^2\theta}{r^2}\partial_\phi p \cdot\Vec{\partial_\phi}+\partial_rp\cdot\Vec{\partial_r},\quad \mathrm{grad}\,p= \partial_\theta p\cdot\Vec{\partial_\theta}+\sin^2\theta\partial_\phi p \cdot\Vec{\partial_\phi}.
\end{align*}
\begin{figure}[htb]
\centering
\includegraphics[scale=0.45]{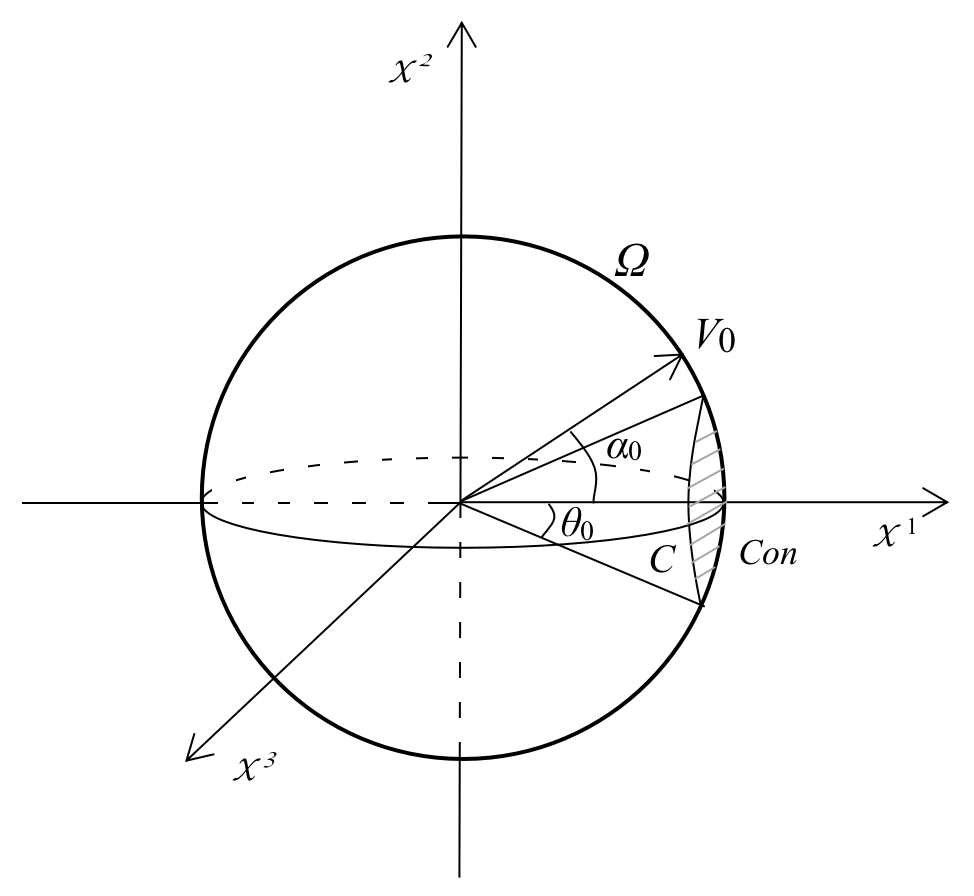}
\caption{Uniform supersonic flow with velocity $V_0$ past a cone $C$ with semi-vertex angle $\theta_0$ and attack angle $\alpha_0$.}\label{fig1}
\end{figure}

Suppose that there is an infinite straight cone $\mathfrak{C}$ in $\mathbb{R}^3$ whose vertex is located at the original point $O$. The cone is symmetric with respect to the $x^1$-axis and its semi-vertex angle is $\theta_0$. Uniform supersonic gas from the left half-space flow towards the cone with attack angle $\alpha_0$, and the flow outside the cone is assumed to be governed by
the non-isentropic compressible Euler system
\begin{align}
    & \mathrm{Div}(\rho V)=0, \label{1.1}\\
    & \mathrm{Div}(\rho V\otimes V)+\mathrm{Grad}\, p=0, \label{1.2} \\
    & \mathrm{Div}(\rho EV)=0, \label{1.3}
\end{align}
which represents the conservation of mass, momentum and energy respectively. State equation of the gas is given by
\begin{equation}
    p=p(\rho, E). \label{1.4}
\end{equation}
After dimensionless scalings (cf. \cite{1}), we may assume that the upcoming flow is
\begin{equation}
    U_0=(\rho_0=1,V_0\in S^2, E_0),\quad E_0\geq\tfrac{1}{2},\label{1.5}
\end{equation}
and \eqref{1.4} becomes
\begin{equation}
    p_0=\Tilde{p}(\rho_0, E_0). \label{1.6}
\end{equation}
On the surface of the cone, we prescribe the slip condition
\begin{equation}
     (V,N)=0, \label{1.7}
\end{equation}
with $N$ the unit outward normal vector on the surface of the cone.

Given that the above problem is invariant under scaling $x\mapsto\alpha x,~\forall\alpha>0$, we may consider that the flow is defined on $\Omega\doteq S^2/Con$, where $Con$ is the common part of the cone and $S^2$. Let $C\doteq\partial\Omega$ be the edge of $\Omega$, and $\mathbf{n}$ the unit outward normal vector of $\Omega$ along $C$.
By direct deduction with differential geometry as in \cite{1}, which we omit here because of the similarity, one has the following compressible Euler system of conical flows on $\Omega$
\begin{align}
    & \mathrm{div}(\rho u)+2\rho w=0,\label{1.8}\\
    & \mathrm{div}(\rho u E)+2\rho wE=0,\label{1.9}\\
    & \mathrm{div}(\rho wu)+2\rho w^2-\rho|u|^2=0,\label{1.10}\\
    & \mathrm{div}(\rho u\otimes u)+3\rho wu+\mathrm{grad}\, p=0,\label{1.11}
\end{align}
which represents the conservation of mass, energy, radial momentum, and tangential momentum respectively.
Slip condition \eqref{1.6} then reduces to
\begin{equation}
    (u,\mathbf{n})=0 \quad    \text{on} \quad C.\label{1.12}
\end{equation}

Our goal is to study the following problem.
\begin{framed}
    \textbf{Problem A}: find a solution to \eqref{1.8}-\eqref{1.11}, \eqref{1.5}, \eqref{1.6}, and \eqref{1.12}.
\end{framed}

\subsection{Radon measure solutions} When Mach number of the upcoming flow is sufficiently large, it is observed that shock layer will be extremely thin and mass will concentrate on the surface of the cone, cf. \cite{4,7}. Hence we consider a solution to Problem A in the class of Radon measures. Let $m$ be a Radon measure on $S^2$.~The pairing between $m$ and a test function $\psi(\theta,\phi)\in C(S^2)$, the set of continuous functions on $S^2$, is given by
$$\langle m,~\psi\rangle=\int_{S^2}\psi(\theta,\phi)~\mathrm{d}m(\theta,\phi).$$
For example, let $C(s)$ be a Lipschitz curve on $S^2$, with arc-length parameter $s\in [0,L]$. ~Then $W(s)\delta_R$, the Dirac measure supported on $C$ with weight $W(s)\in L^1([0,L])$, is defined by
$$\langle W(s)\delta_C,~\psi\rangle=\int_0^L W(s)~\psi\Big|_C~\mathrm{d}s.$$
We also denote $\mathcal{H}^2$ as the standard Hausdorff measure on $S^2$.
\begin{definition}
    Let $m_a,~ n_a,~m_e,~n_e,~m_r,~ n_r,~ m_t,~ n_t,~\varrho,~\wp$ be Radon measures on $\overline{\Omega}$, and $W_C(s)$  a positive integrable function for $s\in[0,L]$. Suppose that

(\romannumeral1) for any  $ \psi\in C^1(S^2)$ and $C^1$ vector field $v$, there hold
\begin{align}
    & \langle m_a,\nabla\psi\rangle=2\langle n_a,\nabla\psi\rangle,\label{1.13}\\
    & \langle m_e,\nabla\psi\rangle=2\langle n_e,\nabla\psi\rangle,\label{1.14}\\
    & \langle m_r,\nabla\psi\rangle=2\langle n_r,\nabla\psi\rangle-\langle n_t,\nabla\psi\rangle, \label{1.15}\\
    & \langle m_t,\mathrm{D}v\rangle+\langle \wp,\mathrm{div} v\rangle=3\langle m_r,v\rangle+\langle W_C\mathbf{n}\delta_C,v\rangle; \label{1.16}
\end{align}

(\romannumeral2) $\varrho,~\wp$ are non-negative measures, $m_a,~m_r,~m_e,~n_e,~m_t,~n_a,~n_r,~n_t,~\wp$  are absolute-continuous with respect to $\varrho$, and there also exist  $\varrho$-a.e. functions $w,~E$ and vector field $u$ such that the Radon-Nikodym derivatives satisfy
\begin{align}
    & w=\frac{\mathrm{d}m_r/\mathrm{d}\varrho}{\mathrm{d}m_a/\mathrm{d}\varrho}=\frac{\mathrm{d}n_r/\mathrm{d}\varrho}{\mathrm{d}n_a/\mathrm{d}\varrho}=\frac{\mathrm{d}n_a}{\mathrm{d}\varrho},\label{1.17}\\
&E=\frac{\mathrm{d}m_e/\mathrm{d}\varrho}{\mathrm{d}m_a/\mathrm{d}\varrho}=\frac{\mathrm{d}n_e/\mathrm{d}\varrho}{\mathrm{d}n_a/\mathrm{d}\varrho},\label{1.18}\\
    & u=\frac{\mathrm{d}m_a}{\mathrm{d}\varrho},~~|u|^2=\frac{\mathrm{d}n_t}{\mathrm{d}\varrho},~~u\otimes u=\frac{\mathrm{d}m_t}{\mathrm{d}\varrho};\label{1.19}
\end{align}

(\romannumeral3) if $\varrho, \wp\ll\mathcal{H}^2$, and their Radon-Nikodym derivatives are
\begin{equation}
    \rho=\frac{\mathrm{d}\varrho}{\mathrm{d}\mathcal{H}^2},\quad p=\frac{\mathrm{d}\wp}{\mathrm{d}\mathcal{H}^2},
\end{equation}
then \eqref{1.4} holds $\mathcal{H}^2$-a.e., and classical entropy condition is valid for discontinuities of functions $\rho, u, w, E$ in this case.

Then we call $(\varrho, u, w, E)$ a Radon measure solution to Problem A.
\end{definition}

We remark that $W_c\mathbf{n}$ represents the (scaled) force of lift/drag acting by the flow on the conical body. From mathematical point of view, the above definition works for gas with general state function \eqref{1.4}, not necessarily restricted to the special Chaplygin gas or limiting hypersonic flows (pressureless Euler flows).

\section{Radon measure solutions of infinite-thin shock layers}\label{sec2}
We noticed that in the hypersonic aerodynamics, it is known that if the Mach number of the upcoming flow is large enough, the shock layer would be quite thin and mass concentrate on the surface of the cone.
We wish to construct Radon measure solutions with such structure, i.e., assuming that a Radon measure solution to Problem A is given by
\begin{align}
    & m_a=\rho_0u_0\mathbb{I}_\Omega \mathrm{d}\mathcal{H}^2+W_a(s)\delta_C,~ ~~~~~~~~~~~ n_a=\rho_0w_0\mathbb{I}_\Omega \mathrm{d}\mathcal{H}^2+w_a(s)\delta_C, \label{2.1}\\
    & m_e=\rho_0u_0E_0\mathbb{I}_\Omega \mathrm{d}\mathcal{H}^2+W_e(s)\delta_C,~ ~~~~~~~~~~~ n_e=\rho_0w_0E_0\mathbb{I}_\Omega \mathrm{d}\mathcal{H}^2+w_e(s)\delta_C, \label{2.2}\\
    & m_r=\rho_0u_0w_0\mathbb{I}_\Omega \mathrm{d}\mathcal{H}^2+W_r(s)\delta_C,~ ~~~~~~~~ n_r=\rho_0w_0^2\mathbb{I}_\Omega \mathrm{d}\mathcal{H}^2+w_e(s)\delta_C,\label{2.3}\\
    & m_t=\rho_0u_0\otimes\rho_0\mathbb{I}_\Omega \mathrm{d}\mathcal{H}^2+W_t(s)\delta_C,~ ~~~~~~ n_t=\rho_0|u_0|^2\mathbb{I}_\Omega \mathrm{d}\mathcal{H}^2+w_t(s)\delta_C,\label{2.4}\\
    & \varrho=\rho_0\mathbb{I}_\Omega\mathrm{d}\mathcal{H}^2+w_\rho\delta_C,~ ~~~~~~~~~~~~~~~~~~~~~~ \wp=\Tilde{p}(\rho_0, E_0)\mathrm{d}\mathcal{H}^2.\label{2.5}
\end{align}
In the above, we have set $\mathbb
I_\Omega(\theta,\phi)$ to be the indicator function of $\Omega$, i.e., $\mathbb
I_\Omega=1$ if $(\theta,\phi)\in\Omega$,  and  $\mathbb
I_\Omega=0$ otherwise. However, observing that we could always construct Radon measure solutions with such structure mathematically, even if the Mach number of the upcoming supersonic flow is not large. (The uniqueness of solutions is a quite delicate issue that we would not touch in this work.)

In the following, we denote $\mathbf{t}(s)$ and $\mathbf{n}(s)$ as the unit tangential and normal vector of $\Omega$ along $C$ respectively.
As in \cite{1}, substituting \eqref{2.1} into \eqref{1.13}, and supposing
\begin{equation*}
    W_a(s)=W_a^n(s)\mathbf{n}(s)+W_a^t(s)\mathbf{t}(s),
\end{equation*}
one has
\begin{align}
    & W_a^n(s)=0,\label{2.6}\\
    & (W_a^t(s))'+2w_a(s)=\rho_0(u,\mathbf{n}).\label{2.7}
\end{align}
Similarly, substituting \eqref{2.2} into \eqref{1.4}, one comes to
\begin{align}
    & W_e^n(s)=0,\label{2.8}\\
    & (W_e^t(s))'+2w_e(s)=\rho_0E_0(u,\mathbf{n}),\label{2.9}
\end{align}
where
\begin{equation*}
    W_e(s)=W_e^n(s)\mathbf{n}(s)+W_e^t(s)\mathbf{t}(s).
\end{equation*}
Substituting \eqref{2.3} into \eqref{1.15}, while by denoting
\begin{equation*}
    W_r(s)=W_r^n(s)\mathbf{n}(s)+W_r^t(s)\mathbf{t}(s),
\end{equation*}
one has
\begin{align}
    & W_r^n(s)=0,\label{2.10}\\
    & (W_r^t(s))'+2w_r(s)-w_t(s)=\rho_0w_0(u,\mathbf{n}).\label{2.11}
\end{align}
Substituting \eqref{2.4} and \eqref{2.5} into \eqref{1.16}, and with the decomposition
\begin{equation*}
    W_t = W_t^{nn}\mathbf{n}\otimes\mathbf{n} + W_t^{nt}\mathbf{n}\otimes \mathbf{t} + W_t^{tn}\mathbf{t}\otimes \mathbf{n} + W_t^{tt}\mathbf{t}\otimes \mathbf{t},
\end{equation*}
one obtains
\begin{align}
   &  W_t^{nn}=W_t^{nt}=W_t^{tn}=0,\label{2.12}\\
   & W_C = (W_t,\mathrm{D}n) + \rho_0(u_0, \mathbf{n})^2+\Tilde{p}(\rho_0,E_0), \label{2.13}\\
   & -(W_t,\mathrm{D}\mathrm{t})+(W_t^{tt})+3W_r^t = \rho_0(u_0, \mathbf{n})(u_0,\mathbf{t}). \label{2.14}
\end{align}

 By Definition 1.1, the unknowns $w_\rho(s),~w(s),~u(s)$ are determined by
 \begin{align}
    & u= W_a(s)/w_\rho(s), \quad |u(s)|^2 = w_t(s)/w_\rho(s),\label{2.15}\\
    & u(s)\otimes u(s) = W_t(s)/w_\rho(s),\label{2.16}\\
    & w = W_r(s)/.W_a(s) = w_r(s)/w_a(s) = w_a(s)/w_\rho(s).\label{2.17}
 \end{align}
 Here, /. means $W_r(s)$ and $W_a(s)$ are linearly dependent.
 From \eqref{2.15}, and writing
\begin{equation*}
u(s)=u^n(s)\mathbf{n}+u^t(s)\mathbf{t},
\end{equation*}
one has
 \begin{equation}
    u^n(s)=0,\quad u^t(s)=W_a^t(s)/w_\rho(s).
 \end{equation}
From \eqref{2.16}, one gets
 \begin{equation}
    u^t(s)^2=w_t(s)/w_\rho(s),
 \end{equation}
and therefore with \eqref{2.12},
 \begin{equation}
    W_t^{tt}(s)=w_\rho(u^t)^2.
 \end{equation}
Thus, \eqref{2.17} indicates that
 \begin{equation}
    W_r^t = w_\rho u^tw,\quad w_a =w_\rho w,\quad w_r = w_\rho w^2.
 \end{equation}
 Therefore from \eqref{2.14}, \eqref{2.9} and \eqref{2.11}, we have the following equations
\begin{align}
    & (w_\rho(u^t)^2)'+3w_\rho u^tw=\rho_0(u_0,~\mathbf{n})(u_0,~\mathrm{t})\triangleq a(s), \label{2.22}\\
    & (w_\rho u^tw)'-w_\rho(u^t)^2+2w_\rho w^2=\rho_0w_0(u_0,~\mathbf{n})\triangleq b(s),\label{2.23}\\
    & (w_\rho u^t)'+2w_\rho w=\rho_0(u_0,\mathbf{n}), \label{2.24}
\end{align}
and
\begin{equation}
    W_C=w_\rho(u^t)^2(\mathbf{t},\mathrm{D}_\mathbf{t}\mathbf{n}) + \rho_0(u_0, \mathbf{n})^2+\Tilde{p}(\rho_0, E_0).
\end{equation}
Hence we have the following theorem on conical flows with infinite-thin shock layers.
\begin{theorem}
    Suppose that $w_\rho(s),u^t(s),w(s)$ are solutions to equations \eqref{2.22}-\eqref{2.24}. Then Problem A admits a Radon measure solution given by
    \begin{align*}
&\varrho=\rho_0\mathbb{I}_\Omega\mathrm{d}\mathcal{H}^2+w_\rho(s)\delta_C,\quad u(s)=u_0\mathbb{I}_\Omega+u^t(s)\mathbf{t}\mathbb{I}_C,\\ &w(s)=w_0\mathbb{I}_\Omega+w(s)\mathbb{I}_C,\quad E=E_0\mathbb{I}_\Omega+E_0\mathbb{I}_C,
    \end{align*}
    provided that
    \begin{equation*}
    W_C=w_\rho(u^t)^2(\mathbf{t},\mathrm{D}_\mathbf{t}\mathbf{n}) + \rho_0(u_0, \mathbf{n})^2+\Tilde{p}(\rho_0, E_0)>0.
    \end{equation*}
\end{theorem}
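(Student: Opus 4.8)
The proof amounts to reversing the derivation that precedes the statement: we are handed $w_\rho,u^t,w$ solving \eqref{2.22}--\eqref{2.24}, and we must produce all the data of Definition 1.1 and verify (\romannumeral1)--(\romannumeral3). The plan is as follows. First I would \emph{define} the auxiliary weights on $C$ by the algebraic relations forced by \eqref{2.15}--\eqref{2.21}: put $u(s)=u^t(s)\mathbf t(s)$ and $E\equiv E_0$ on $C$, and then $W_a^t=w_\rho u^t$, $w_a=w_\rho w$, $w_r=w_\rho w^2$, $w_t=w_\rho(u^t)^2$, $W_r^t=w_\rho u^t w$, $W_t^{tt}=w_\rho(u^t)^2$, $W_e^t=E_0w_\rho u^t$, $w_e=E_0w_\rho w$, all normal components and all mixed tensor components being zero, and finally $W_C:=w_\rho(u^t)^2(\mathbf t,\mathrm D_{\mathbf t}\mathbf n)+\rho_0(u_0,\mathbf n)^2+\tilde p(\rho_0,E_0)$. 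With these, assemble the ten measures $m_a,\dots,\wp$ via \eqref{2.1}--\eqref{2.5}. By construction the Radon--Nikodym identities \eqref{1.17}--\eqref{1.19}, i.e.\ condition (\romannumeral2), hold on the $\delta_C$ part (the two expressions for $w$ agree because $w_rw_\rho=w_a^2$; the linear dependence of $W_r$ and $W_a$ holds because $W_r=wW_a$; and $|u|^2=w_t/w_\rho$, $u\otimes u=W_t/w_\rho$ are built in), while on the $\mathbb I_\Omega\,\mathrm d\mathcal H^2$ part all ratios are automatic since every measure there is built from the single constant state $U_0$. Condition (\romannumeral3) is vacuous here: $\varrho$ carries the singular term $w_\rho\delta_C$, so $\varrho\not\ll\mathcal H^2$; and on $\Omega$ the pair $(\rho_0,\tilde p(\rho_0,E_0))$ satisfies \eqref{1.6} while $U_0$ has no interior discontinuity.

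It remains to verify the weak conservation identities \eqref{1.13}--\eqref{1.16}, which is the heart of the argument. Each is split into its $\mathbb I_\Omega\,\mathrm d\mathcal H^2$ part and its $\delta_C$ part. For the interior part, I would use that the uniform state $U_0=(\rho_0=1,V_0,E_0)$ is a smooth (trivial) solution of the conical system \eqref{1.8}--\eqref{1.11} on $\Omega$, so that upon integrating by parts over $\Omega$ the interior volume terms cancel in pairs and only the boundary integrals over $C=\partial\Omega$, each carrying a factor $(\,\cdot\,,\mathbf n)$, survive; these produce exactly $\rho_0(u_0,\mathbf n)$, $\rho_0E_0(u_0,\mathbf n)$, $\rho_0w_0(u_0,\mathbf n)$, $a(s)=\rho_0(u_0,\mathbf n)(u_0,\mathbf t)$ and $\rho_0(u_0,\mathbf n)^2+\tilde p(\rho_0,E_0)$. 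For the $\delta_C$ part, since $C$ is a smooth closed curve on $S^2$ in arc-length parametrization, pairing a weight $W(s)\mathbf t$ (resp.\ a tensor weight $W^{tt}\mathbf t\otimes\mathbf t$) against $\nabla\psi$ (resp.\ $\mathrm Dv$) and integrating by parts over the closed curve turns the tangential derivative into $-W'$ (resp.\ produces $-(W^{tt})'$ together with curvature terms $(\mathbf t,\mathrm D_{\mathbf t}\mathbf t)$ and $(\mathbf t,\mathrm D_{\mathbf t}\mathbf n)$). The key geometric point is that $(\mathbf t,\mathrm D_{\mathbf t}\mathbf t)=\tfrac12\tfrac{\mathrm d}{\mathrm ds}|\mathbf t|^2=0$, so this curvature term drops out of the $\mathbf t$-component of \eqref{1.16}, whereas $(\mathbf t,\mathrm D_{\mathbf t}\mathbf n)$ persists in the $\mathbf n$-component and is absorbed into $W_C$. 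Matching $\delta_C$ coefficients then reproduces \eqref{2.7}, \eqref{2.9}, \eqref{2.11}, \eqref{2.14} and the formula \eqref{2.13} for $W_C$; inserting the algebraic definitions of the weights collapses \eqref{2.7} to \eqref{2.24}, \eqref{2.11} to \eqref{2.23}, \eqref{2.14} (after $(\mathbf t,\mathrm D_{\mathbf t}\mathbf t)=0$) to \eqref{2.22}, and turns \eqref{2.9} into $E_0$ times \eqref{2.24} --- all of which hold by hypothesis. Finally the assumption $W_C>0$ (together with $w_\rho\ge 0$, needed for $\varrho$ to be a non-negative measure) guarantees that $W_C(s)\mathbf n\delta_C$ is a legitimate right-hand side in \eqref{1.16} with positive integrable weight, completing the verification, and \eqref{2.25} is precisely the displayed value of $W_C$.

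The only real obstacle is bookkeeping intertwined with geometry: executing the two integrations by parts --- over $\Omega$ and over the closed curve $C$ --- in the curved setting of $S^2$ and confirming that every term lands where \eqref{2.6}--\eqref{2.25} predict, in particular that the normal/mixed components $W_a^n,W_e^n,W_r^n,W_t^{nn},W_t^{nt},W_t^{tn}$ are \emph{forced} to vanish rather than being free parameters, and that the sole surviving curvature contribution is the one feeding $W_C$. Since the analogous computation for the zero-attack-angle polytropic case is carried out in \cite{1}, I would cite that and indicate only the minor modifications caused by the attack angle --- which merely promote $(u_0,\mathbf n)$, $(u_0,\mathbf t)$, $w_0$ from constants to the explicit $s$-dependent functions entering $a(s)$ and $b(s)$ --- and by the general state law $\tilde p(\rho_0,E_0)$.
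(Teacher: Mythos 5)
Your proposal is correct and follows essentially the same route as the paper: the paper gives no separate proof of Theorem 2.1 but obtains it by reading the derivation \eqref{2.1}--\eqref{2.25} in reverse, i.e.\ substituting the measure ansatz into the weak identities \eqref{1.13}--\eqref{1.16}, matching the $\mathbb{I}_\Omega\,\mathrm{d}\mathcal{H}^2$ and $\delta_C$ parts, and deferring the differential-geometric integrations by parts to \cite{1}, exactly as you do. Your parenthetical observation that $w_\rho\ge 0$ is additionally needed for $\varrho$ to be a non-negative measure is a fair point that the theorem's hypotheses leave implicit.
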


We now derive a second order ODE from \eqref{2.22}-\eqref{2.24}. As in \cite{1}, by setting
\begin{equation}
    f(s)\doteq w_\rho (u^t)^2,\quad h(s)\doteq w_\rho u^tw,\quad y(s)\doteq w_\rho u^t,\label{1}
\end{equation}
\eqref{2.22}-\eqref{2.24} read
\begin{align}
    & f'+3h=a(s),\label{2.26}\\
    & h'-f+2h^2/f=b(s),\label{2.27}\\
    & y'+2hy/f=\rho_0(u_0,\mathbf{n}).\label{2.28}
\end{align}
We notice that \eqref{2.28} is already decoupled, while \eqref{2.26} implies that
\begin{equation}
    h(s)=(a(s)-f'(s))/3. \label{2.29}
\end{equation}
Then \eqref{2.27} becomes
\begin{equation}
    ff''-\frac{2}{3}f'^2+\frac{4}{3}a(s)f'+3f^2+(3b(s)-a'(s))f=\frac{2}{3}a(s)^2.
\end{equation}

Changing to $\phi$-variable, $\phi\in[-\pi,\pi]$, with $\mathrm{d}s/\mathrm{d}\phi=\sin\theta_0$, and by the validity of (also see \cite{1})
\begin{align*}
    & \rho_0=1,\\
    & (u_0,~\mathbf{n})=\cos\alpha_0 \sin \theta_0-\sin\alpha_0 \cos \theta_0 \cos\phi,\\
    & (u_0,~\mathbf{t})=\sin\alpha_0 \sin\phi,\\
    & w_0=\cos\alpha_0 \cos \theta_0+\sin\alpha_0 \sin \theta_0\cos\phi,
\end{align*}
we finally come to the ODE
\begin{framed}
    \begin{equation}
    \begin{aligned}
        f\ddot{f}-\frac{2}{3}\dot{f}^2+ (a_1\sin\phi+ & a_2\sin2\phi)\dot{f}+a_3f^2\\
        &+(a_4+a_5\cos\phi+a_6\cos2\phi)f=\frac{3}{8}(a_1\sin\phi+a_2\sin2\phi)^2. \label{2.31}
    \end{aligned}
\end{equation}
\end{framed}
In the equation, $\dot{f}$ represents $\mathrm{d}f(\phi)/\mathrm{d}\phi$, and
\begin{align*}
    & a_1=-\frac{2}{3}\sin^2\theta_0\sin(2\alpha_0),\quad a_2=\frac{1}{3}\sin(2\theta_0)\sin^2{\alpha_0},~\\
    & a_3=3\sin^2\theta_0,\quad a_4=\frac{3}{4}\sin^2\theta_0\sin(2\theta_0)(3\cos^2\alpha_0-1),~\\
    & a_5=\frac{1}{2}\sin^2\theta_0\sin(2\alpha_0)(1-3\cos(2\theta_0)),\quad a_6=-\sin\theta_0\cos\theta_0\sin^2\alpha_0(1+\frac{2}{3}\sin^2\theta_0).
\end{align*}
Besides, if $f(\phi)$ is a solution to equation \eqref{2.31}, then the lift/drag on the cone is given by
\begin{framed}
    \begin{equation}
    W_C(\phi)=(\cos\alpha_0\sin\theta_0-\sin\alpha_0\cos\theta_0\cos\phi)^2-f(\phi)\cot\theta_0+\Tilde{p}(\rho_0, E_0),\quad \phi\in[-\pi,\pi].\label{2.32}
\end{equation}
\end{framed}

\begin{remark}
    We call \eqref{2.32} as the generalized Newton-Busemann pressure law for conical flow with attack angle $\alpha_0$. We require that $W_C(\phi)>0$ to guarantee that the mass indeed concentrates on the surface of the cone.
\end{remark}

\begin{remark}
   Observing that $f$ stands for the double of the tangential kinetic energy of the concentrated gas. Due to physical considerations and symmetry with respect to the  $x^1$-axis, $f$ should be a non-negative even periodic $C^2$ function, with period $2\pi$, and $f(-\pi)=f(0)=f(\pi)=0,~\dot{f}(-\pi)=\dot{f}(0)=\dot{f}(\pi)=0$.
\end{remark}

\begin{remark}
When $\alpha_0\neq0$, finding an analytical solution to \eqref{2.31} is an open problem. It is also not straightforward to construct a numerical periodic solution, since it is non-autonomous, with non-homogeneous terms, and singular at $\phi=0, \pm\pi$, as $f(\phi)$ takes value $0$ when $\phi=0$ and $\pm\pi$. Therefore, we will propose a numerical method to solve the ODE in Section \ref{sec4} (cf. \cite{8,5}).
\end{remark}

\section{Chaplygin conical flows}\label{sec3}
We now consider the case when the upcoming flow is Chaplygin gas, with state function
\begin{equation}
    p=-\frac{A}{\rho},\quad A>0\label{3.1}
\end{equation}
and assuming $E=E_0$ being a given constant in the whole flow field.
Thus,
\begin{equation}
    p_0=\Tilde{p}(\rho_0, E_0)=-\frac{1}{\rho_0M_\infty^2},\label{3.2}
\end{equation}
with $M_\infty$ the Mach number of the upcoming flow. Recall that $\rho_0=1$ by the non-dimensional scalings.

\subsection{Conical Chaplygin flow without attack angle} We first deal with the particular case that $\alpha_0=0$. By Theorem 2.1, one reaches the following corollary.
\begin{corollary}
    When Mach number of the upcoming Chaplygin gas $M_\infty>\displaystyle{\frac{1}{\sin\theta_0}},$ Problem A (for Chaplygin gas) has a Radon measure solution
    \begin{equation*}
    \varrho=\rho_0\mathbb{I}_\Omega\mathrm{d}\mathcal{H}^2+\frac{1}{2}\tan\theta_0\delta_C,\quad w=\cos\theta_0\mathbb{I}_\Omega+\cos\theta_0\mathbb{I}_C,\quad
    u=u_0\mathbb{I}_\Omega,\quad E=E_0\mathbb{I}_\Omega+E_0\mathbb{I}_C,
    \end{equation*}
    and the pressure on C is given by
    \begin{equation*}
    W_C=\sin^2\theta_0-\frac{1}{M_\infty^2}.
    \end{equation*}
\end{corollary}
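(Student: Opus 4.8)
The plan is to specialize the governing system \eqref{2.22}--\eqref{2.24} to the symmetric case $\alpha_0=0$, exhibit an explicit constant solution, and then invoke Theorem 2.1. When $\alpha_0=0$ the upcoming-flow data collapse to $(u_0,\mathbf{n})=\sin\theta_0$, $(u_0,\mathbf{t})=0$, $w_0=\cos\theta_0$, so that $a(s)\equiv 0$, $b(s)\equiv \sin\theta_0\cos\theta_0$ and $\rho_0(u_0,\mathbf{n})=\sin\theta_0$; in particular every coefficient is constant, so it is natural to search for $w_\rho,u^t,w$ constant. Physically, with no attack angle the whole configuration is axisymmetric about the $x^1$-axis, hence the gas concentrated on the edge $C$ should carry no tangential velocity along $C$; accordingly I would impose the ansatz $u^t\equiv 0$.

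Under $u^t\equiv 0$ and constancy, equation \eqref{2.22} is satisfied identically, \eqref{2.24} reduces to $2w_\rho w=\sin\theta_0$, and \eqref{2.23} reduces to $2w_\rho w^2=\sin\theta_0\cos\theta_0$. Dividing the latter by the former gives $w=\cos\theta_0$, and then the former gives $w_\rho=\tfrac12\tan\theta_0$. One should also note that the ansatz is essentially forced: for constant solutions, \eqref{2.22} with $a\equiv 0$ demands $w_\rho u^t w=0$, and on the physically admissible branch $w_\rho>0$ and $w=\cos\theta_0\neq 0$, so necessarily $u^t=0$. The one point that genuinely deserves care here — and the only real obstacle — is that this deduction must be carried out on the \emph{original} first-order system \eqref{2.22}--\eqref{2.24} rather than on the reduced second-order ODE \eqref{2.31} (equivalently \eqref{2.27}), because the term $2h^2/f$ becomes the indeterminate $0/0$ once $f=w_\rho(u^t)^2\equiv 0$; working directly with \eqref{2.22}--\eqref{2.24} sidesteps this degeneracy, and the consistency of the resulting algebraic relations is exactly what produces $w=\cos\theta_0$ and $w_\rho=\tfrac12\tan\theta_0$.

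With $(w_\rho,u^t,w)=(\tfrac12\tan\theta_0,\,0,\,\cos\theta_0)$ and $w_0=\cos\theta_0$, Theorem 2.1 applies verbatim and yields the stated Radon measure solution $\varrho=\rho_0\mathbb{I}_\Omega\,\mathrm{d}\mathcal{H}^2+\tfrac12\tan\theta_0\,\delta_C$, $u=u_0\mathbb{I}_\Omega$, $w=\cos\theta_0\mathbb{I}_\Omega+\cos\theta_0\mathbb{I}_C$, $E=E_0\mathbb{I}_\Omega+E_0\mathbb{I}_C$, provided $W_C>0$. Finally I would evaluate $W_C$ from the formula in Theorem 2.1 (equivalently \eqref{2.32}): since $u^t\equiv 0$ and $\alpha_0=0$, the first term drops and $W_C=\rho_0(u_0,\mathbf{n})^2+\tilde p(\rho_0,E_0)=\sin^2\theta_0-\tfrac{1}{M_\infty^2}$, using \eqref{3.2} together with $\rho_0=1$. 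The admissibility requirement $W_C>0$ then reads $\sin^2\theta_0>1/M_\infty^2$, i.e. $M_\infty>1/\sin\theta_0$ because $\sin\theta_0>0$ and $M_\infty>0$, which is precisely the hypothesis of the corollary, completing the proof.
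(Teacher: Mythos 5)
Your proposal is correct and follows essentially the same route as the paper: specialize \eqref{2.22}--\eqref{2.24} to $\alpha_0=0$, read off the constant solution $(w_\rho,u^t,w)=(\tfrac12\tan\theta_0,0,\cos\theta_0)$, and impose $W_C=\sin^2\theta_0-1/M_\infty^2>0$ to get the Mach number condition. Your additional remarks — that $u^t\equiv 0$ is forced for constant solutions and that one must work with the first-order system to avoid the $0/0$ degeneracy in $2h^2/f$ — are sound elaborations of steps the paper leaves implicit.
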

    \begin{proof}
    When $\alpha_0=0$, by \eqref{2.22}-\eqref{2.24}, one obtains the following equations
    \begin{align*}
        & (w_\rho(u^t)^2)'+3w_\rho u^tw=0,\\
    & (w_\rho u^tw)'-w_\rho(u^t)^2+2w_\rho w^2=\cos\theta_0\sin\theta_0,\\
    & (w_\rho u^t)'+2w_\rho w=\sin\theta_0.
    \end{align*}
    They yield the solution
    \begin{equation*}
        w_\rho(s)=\frac{1}{2}\tan\theta_0,\quad u^t(s)=0, \quad w(s)=\cos\theta_0.
    \end{equation*}
    The requirement
    \begin{equation*}
    W_C=\sin^2\theta_0-\frac{1}{\rho_0M_\infty^2}>0
    \end{equation*}
    justifies the assumption
    \begin{equation*}
    M_\infty >\sqrt{\frac{1}{\rho_0\sin^2\theta_0}}=\frac{1}{\sin\theta_0}.
    \end{equation*}
    \end{proof}
For limiting hypersonic flow, namely $M_{\infty}=\infty$, we have $W_C=\sin^2\theta_0$, which is the classical Newton sine-squared law for cones (cf. \cite[Sections 12.9 and 15.4]{4}).
\subsection{General case with attack angle}
We have the following corollary directly from Theorem 2.1.
\begin{corollary}
    Let $w_\rho(s),u^t(s),w(s)$ be the solutions to equations \eqref{2.22}-\eqref{2.24}, and there also holds
    \begin{equation*}
        M_\infty>\sup_{\phi\in[-\pi,\pi]}\Big(\frac{1}{(\cos\alpha_0\sin\theta_0-\sin\alpha_0\cos\theta_0\cos\phi)^2-w_\rho (u^t)^2(s(\phi))\cot\theta_0}\Big)^{1/2},
    \end{equation*}
    with $M_\infty$ the Mach number of the upcoming Chaplygin flow.
    Then Problem A admits a measure solution given by
    \begin{align*}
    & \varrho=\rho_0\mathbb{I}_\Omega\mathrm{d}\mathcal{H}^2+w_\rho(s)\delta_C,\quad u(s)=u_0\mathbb{I}_\Omega+u^t(s)\mathbf{t}\mathbb{I}_C,\\
    & w(s)=w_0\mathbb{I}_\Omega+w(s)\mathbb{I}_C,\quad E=E_0\mathbb{I}_\Omega+E_0\mathbb{I}_C.
    \end{align*}
\end{corollary}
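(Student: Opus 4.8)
The plan is to obtain this as an immediate consequence of Theorem 2.1, so the only substantive work is to rewrite the positivity requirement $W_C(\phi)>0$ appearing there in terms of the Mach number $M_\infty$ of the upcoming Chaplygin flow. First I would recall that under the Chaplygin hypothesis \eqref{3.1} with $E\equiv E_0$ and the normalization $\rho_0=1$, the state-function value entering $W_C$ is $\tilde p(\rho_0,E_0)=p_0=-1/M_\infty^2$ by \eqref{3.2}. I would also record the elementary differential-geometric identity $(\mathbf t,\mathrm D_{\mathbf t}\mathbf n)=-\cot\theta_0$ for the latitude circle $C=\partial\Omega$ with outward unit normal $\mathbf n$ — this is exactly the computation already used to pass from the expression in Theorem 2.1 to \eqref{2.32} — together with the abbreviation $f(\phi)=w_\rho(u^t)^2(s(\phi))$.

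Substituting these into the formula for $W_C$ from Theorem 2.1 (equivalently \eqref{2.32}) gives
\[
W_C(\phi)=\big(\cos\alpha_0\sin\theta_0-\sin\alpha_0\cos\theta_0\cos\phi\big)^2-w_\rho(u^t)^2(s(\phi))\,\cot\theta_0-\frac{1}{M_\infty^2},\qquad \phi\in[-\pi,\pi].
\]
Since $\phi\mapsto\big(\cos\alpha_0\sin\theta_0-\sin\alpha_0\cos\theta_0\cos\phi\big)^2-w_\rho(u^t)^2(s(\phi))\cot\theta_0$ is continuous on the compact interval $[-\pi,\pi]$, the requirement $W_C(\phi)>0$ for every $\phi$ is equivalent to
\[
\frac{1}{M_\infty^2}<\min_{\phi\in[-\pi,\pi]}\Big[\big(\cos\alpha_0\sin\theta_0-\sin\alpha_0\cos\theta_0\cos\phi\big)^2-w_\rho(u^t)^2(s(\phi))\cot\theta_0\Big],
\]
which, after taking reciprocals and square roots (legitimate because the bracketed quantity is then forced to be strictly positive for all $\phi$, so the $\sup$ below is finite), is precisely the displayed bound on $M_\infty$ in the statement.

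Once $W_C(\phi)>0$ is secured in this way, Theorem 2.1 applies verbatim and delivers a Radon measure solution to Problem A of exactly the asserted form; the assignment $E=E_0\mathbb I_\Omega+E_0\mathbb I_C$ is consistent both with the standing Chaplygin assumption $E\equiv E_0$ and with \eqref{2.2} and \eqref{1.18}. I do not expect any genuine obstacle here: the corollary is a direct specialization, and the only point meriting a line of care is verifying that the bracketed denominator appearing in the hypothesis remains strictly positive, so that the supremum defining the Mach-number threshold is well defined — and, as noted, this positivity is automatically encoded in the condition $W_C(\phi)>0$ itself.
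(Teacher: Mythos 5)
Your proposal is correct and follows exactly the route the paper intends: the paper states this corollary ``directly from Theorem 2.1'' with no written proof, and your argument simply fills in the two needed substitutions --- the Chaplygin value $\Tilde{p}(\rho_0,E_0)=-1/M_\infty^2$ from \eqref{3.2} and the identity $(\mathbf{t},\mathrm{D}_{\mathbf{t}}\mathbf{n})=-\cot\theta_0$ already used in \eqref{2.32} --- and then translates $W_C(\phi)>0$ into the stated Mach-number bound. Your side remark about the bracketed quantity needing to be strictly positive for the supremum to be well defined is a fair and correct observation that the paper leaves implicit.
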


\section{Numerical results with Fourier spectral method}\label{sec4}
\subsection{Non-linear system and spectral method}
Our goal is to find a $C^2$ periodic solution to equation \eqref{2.31}, which should be an even nonnegative function, satisfying $f(0)=f(\pi)=0, ~\dot{f}(0)=\dot{f}(\pi)=0$ owing to physical considerations.
Since the solution is an even periodic function, we use here the Fourier series of $f(\phi)$:
\begin{equation}
    f(\phi)=\sum_{n=0}^\infty b_n\cos(n\phi),\quad b_n\in\mathbb{R}.
\end{equation}
The first and  second derivative of $f(\phi)$ are
\begin{equation}
    \dot{f}(\phi)=-\sum_{n=1}^\infty nb_n\sin(n\phi),\quad \ddot{f}(\phi)=-\sum_{n=1}^\infty n^2b_n\cos(n\phi).
\end{equation}
Thus
\begin{align}
    & \dot{f}^2=-\frac{1}{2}\sum_{n=1}^\infty\sum_{k=1}^\infty nkb_nb_k(\cos(n+k)\phi-\cos(n-k)\phi), \\
    & f\ddot{f}=-\frac{1}{2}\sum_{n=0}^\infty\sum_{k=1}^\infty k^2b_nb_k(\cos(n+k)\phi+\cos(n-k)\phi).
\end{align}
Set $b_{-1}=0$. Then through direct calculations, \eqref{2.31} becomes
\begin{equation}
\begin{aligned}
    &\sum_{k=1}^\infty\Big( -b_0k^2b_k+a_4b_k+\frac{1}{2}a_5b_{k-1}+\frac{1}{2}a_6b_{k-2}+\frac{1}{2}a_5b_{k+1}+\frac{1}{2}a_6b_{k+2}+2a_3b_0b_k \\
    &+\frac{1}{2}a_1(k-1)b_{k-1} +\frac{1}{2}a_2(k-2)b_{k-2}-\frac{1}{2}a_1(k+1)b_{k+1}-\frac{1}{2}a_2(k+2)b_{k+2}\Big)\cos(k\phi) \\
    &+\underbrace{\sum_{n=1}^\infty \sum_{k=1}^\infty((\frac{1}{3}n-\frac{1}{2}k)k+\frac{1}{2}a_3)b_nb_k\cos(n+k)\phi}_A  +  \underbrace{\sum_{n=1}^\infty \sum_{k=1}^\infty(\frac{1}{2}a_3-(\frac{1}{3}n+\frac{1}{2}k)k)b_nb_k\cos(n-k)\phi}_B \\
    & +\Big(\frac{3}{8}a_1a_2+\frac{1}{2}a_5b_0+\frac{1}{2}a_6b_1-\frac{1}{2}a_2b_1\Big)\cos\phi+\Big(\frac{1}{2}a_6b_0+\frac{3}{16}a_1^2\Big)\cos2\phi+\frac{3}{8}a_1a_2\cos3\phi\\
    &+\frac{3}{16}a_2^2\cos4\phi
     +a_3b_0^2-\frac{1}{2}a_1b_1-a_2b_2+\frac{1}{2}a_6b_2+\frac{1}{2}a_5b_1+a_4b_0-\frac{3}{16}(a_1^2+a_2^2)=0. \label{3.5}
\end{aligned}
\end{equation}
\\ \\
One also computes $A$ and $B$ as
\begin{equation*}
\begin{aligned}
    A & =\sum_{m=2}^\infty \cos(m\phi)(\sum_{k=1}^{m-1}(\frac{1}{3}(m-k)-\frac{1}{2}k)k+\frac{1}{2}a_3)b_kb_m\\
    &=\sum_{n=2}^\infty \cos(n\phi)(\sum_{k=1}^{n-1}(\frac{1}{2}a_3+k(\frac{1}{3}n-\frac{5}{6}k))b_kb_{n-k})\\
    & =\sum_{l=2}^\infty \cos(l\phi)(\sum_{k=1}^{l-1}(\frac{1}{2}a_3+k(\frac{1}{3}l-\frac{5}{6}k))b_kb_{l-k}),\\
    B & =\sum_{k=1}^\infty(\frac{1}{2}a_3-\frac{5}{6}k^2)b_k^2+\sum_{l=1}^\infty\sum_{k=1}^l(\frac{1}{2}a_3-(\frac{1}{3}(k+l)+\frac{1}{2}k)k)b_{k+l}b_k)\cos(l\phi)\\
    & \ \ \ +\sum_{l=1}^\infty\sum_{k=l+1}^\infty((\frac{1}{2}a_3-(\frac{1}{3}(k+l)+\frac{1}{2}k)k)b_{k+l}b_k+(\frac{1}{2}a_3-(\frac{1}{3}(k-l)+\frac{1}{2}k)k)b_{k-l}b_k)\cos(l\phi)\\
    &=\sum_{n=1}^\infty(\frac{1}{2}a_3-\frac{5}{6}n^2)b_n^2\\
    &\quad+\sum_{l=1}^\infty\left(\sum_{k=1}^\infty(\frac{1}{2}a_3-(\frac{1}{3}l+\frac{5}{6}k)k)b_{k+l}b_k+\sum_{k=l+1}^\infty(\frac{1}{2}a_3-(\frac{5}{6}k-\frac{1}{3}l)k)b_{k-l}b_k\right)\cos(l\phi).
\end{aligned}
\end{equation*}
Reorganizing equation \eqref{3.5}, one has the following:

\begin{framed}
\begin{equation}
    A_0+\sum_{l=1}^\infty A_l\cos(l\phi)+\sum_{l=2}^\infty B_l\cos(l\phi)+C_1\cos\phi+C_2\cos(2\phi)+C_3\cos(3\phi)+C_4\cos(4\phi)=0,\label{3.6}
\end{equation}
\end{framed}
in which
\begin{equation*}
    \begin{aligned}
    & A_0=\sum_{n=1}^\infty(\frac{1}{2}a_3-\frac{5}{6}n^2)b_n^2+a_3b_0^2-\frac{1}{2}a_1b_1-a_2b_2+\frac{1}{2}a_6b_2+\frac{1}{2}a_5b_1+a_4b_0-\frac{3}{16}(a_1^2+a_2^2),\\
    & A_l=\sum_{k=1}^\infty(\frac{1}{2}a_3-(\frac{1}{3}l+\frac{5}{6}k)k)b_{k+l}b_k+\sum_{k=l+1}^\infty(\frac{1}{2}a_3-(\frac{5}{6}k-\frac{1}{3}l)k)b_{k-l}b_k \\
    &~~~~~~~~\qquad - b_0l^2b_l+a_4b_l+\frac{1}{2}a_5b_{l-1}+\frac{1}{2}a_6b_{l-2}+\frac{1}{2}a_5b_{l+1}+\frac{1}{2}a_6b_{l+2}+2a_3b_0b_l \\
    & ~~~~~~~~\qquad+\frac{1}{2}a_1(l-1)b_{l-1} +\frac{1}{2}a_2(l-2)b_{l-2}-\frac{1}{2}a_1(l+1)b_{l+1}-\frac{1}{2}a_2(l+2)b_{l+2},\\
    & B_l=\sum_{k=1}^{l-1}(\frac{1}{2}a_3+(\frac{1}{3}l-\frac{5}{6}k)k)b_kb_{l-k},\\
    & C_1=-\frac{3}{8}a_1a_2+\frac{1}{2}a_5b_0+\frac{1}{2}a_6b_1-\frac{1}{2}a_2b_1,\\
    & C_2=\frac{1}{2}a_6b_0+\frac{3}{16}a_1^2,\quad C_3=\frac{3}{8}a_1a_2,\quad C_4=\frac{3}{16}a_2^2.
\end{aligned}
\end{equation*}

Now we do truncation for $f(\phi)$ with term $N$, i.e., we set $b_{N+1}=b_{N+2}=\cdots=0$. Then
\eqref{3.6} yields that
\begin{equation}
    \left\{
    \begin{aligned}
        & A_0=0,   \\
        & A_1+C_1=0, \\
        & A_i+B_i+C_i=0,\quad i=2,3,4, \\
        & A_j+B_j=0,\quad   j=5,\ldots,N.
    \end{aligned}
    \right.
\end{equation}
We denote this system as
\begin{equation}
    F(b)=(F_0(b),F_1(b),\ldots,F_N(b))=0,\quad b=(b_0,b_1,\ldots,b_N). \label{3.8}
\end{equation}

We then need to solve this non-linear system numerically.

\subsection{Newton's method}
Newton's method is a classical numerical algorithm to solve non-linear equations, say
\begin{equation*}
    f(x)=0.
\end{equation*}
It is based on Taylor's formula of the function
\begin{equation*}
    f(x)=f(x_0)+f'(x_0)(x-x_0)+\frac{1}{2}f''(x_0)(x-x_0)^2+R_n(x-x_0)^2,
\end{equation*}
and its idea is regarding the solution to the linear equation
\begin{equation*}
    f(x_0)+f'(x_0)(x_1-x_0)=0,\quad \text{namely}\quad  x_1=x_0-\frac{f(x_0)}{f'(x_0)}
\end{equation*}
as an approximate solution to the non-linear equation. Then we repeat this procedure from point $x_1$ and get $x_2$, from $x_{n-1}$ and get $x_n$, until $|x_n-x_{n-1}|$ is tolerably small. For the system
\begin{equation*}
    F(x)=0,
\end{equation*}
we only need to change the derivative $f'(x)$ into the Jacobian matrix $\mathrm{D}F$,
\begin{equation}
    \mathrm{D}F=\Big(\frac{\partial F_i}{\partial b_j}\Big)_{i,j=0,\dots,N},
\end{equation}
and the rest is the same. See Algorithm \ref{al1}.

\begin{algorithm}[]  
	\caption{Newton's method for system \eqref{3.8}.}\label{al1}
	\LinesNumbered 
	\KwIn{$b_0$, $y_0$, $max$, $tol$;}
	\KwOut{$b_0$, $y_0$;}
	$b_0$ is the guessing initial point of $F$,~$y_0=F(b_0)$. $max$ is iteration times and $tol$ is tolerance\; 
	\For{i=1: max}{
		$J\gets \mathrm{D}F(b_0)$\;
        $b_1\gets b_0-J^{-1}y_0$\;
        \If{$|b_1-b_0|\geq tol$}{
            $y_0\gets F(b_1)$\;
            $b_0\gets b_1$\;
        }
    }
\end{algorithm}

\subsection{Numerical results for $N=5,6,7,8,9,10$} In the Figures \ref{fig2} and \ref{fig3},  we show numerical results for $f, f'$ and $W_C$, with the semi-vertex angle of the cone being $\theta_0=\frac{\pi}{6}$, and various attack angles $\alpha_0$ up to $\tfrac{\pi}{6}$ for $N=5$ and $10$. In each figure, the graph on the left is the numerical solution of $f(\phi)$ and its derivative $f'(\phi)$, and on the right is the pressure $W_C(\phi)$.

\begin{figure}[h]
    \centering
    \subfigure[$\alpha_0=\frac{\pi}{36}$]{
    \begin{minipage}[b]{.3\linewidth}
    \centering
    \includegraphics[scale=0.13]{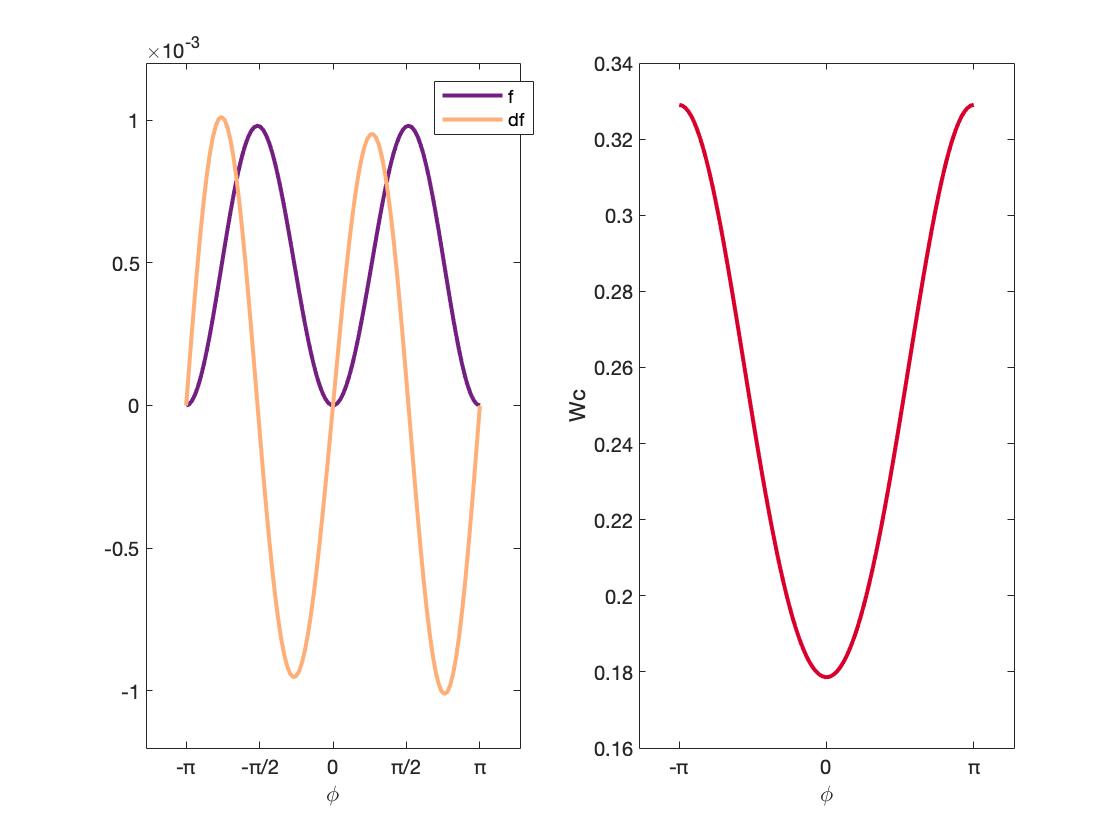}
    \end{minipage}
    }
    \subfigure[$\alpha_0=\frac{\pi}{24}$]{
    \begin{minipage}[b]{.3\linewidth}
    \centering
    \includegraphics[scale=0.13]{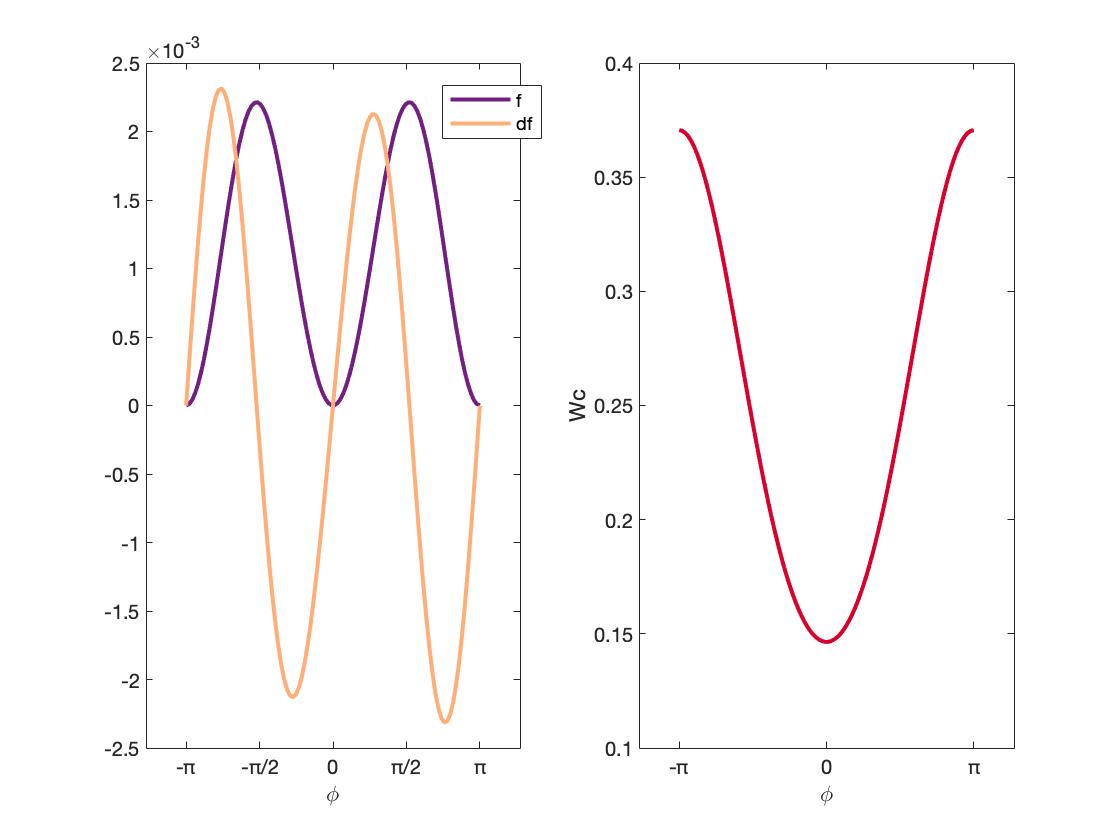}
    \end{minipage}
    }
    \subfigure[$\alpha_0=\frac{\pi}{18}$]{
    \begin{minipage}[b]{.3\linewidth}
    \centering
    \includegraphics[scale=0.13]{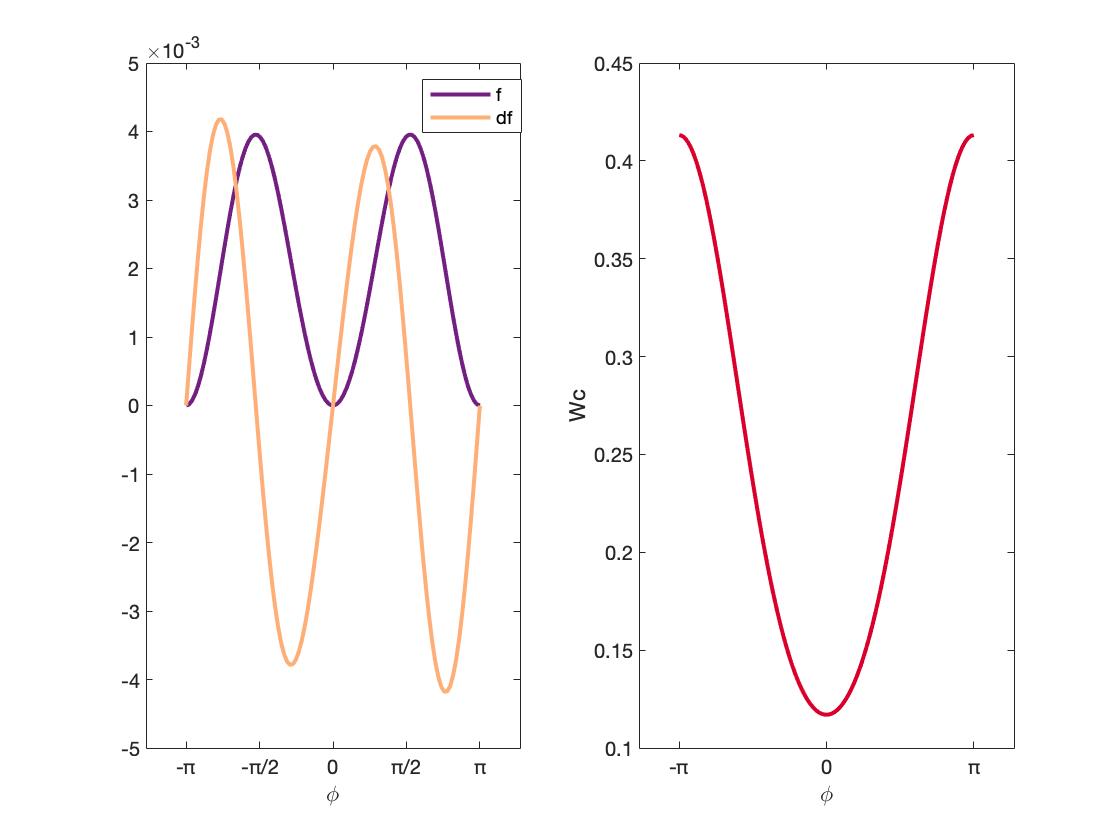}
    \end{minipage}
    }
    \subfigure[$\alpha_0=\frac{\pi}{12}$]{
    \begin{minipage}[b]{.3\linewidth}
    \centering
    \includegraphics[scale=0.13]{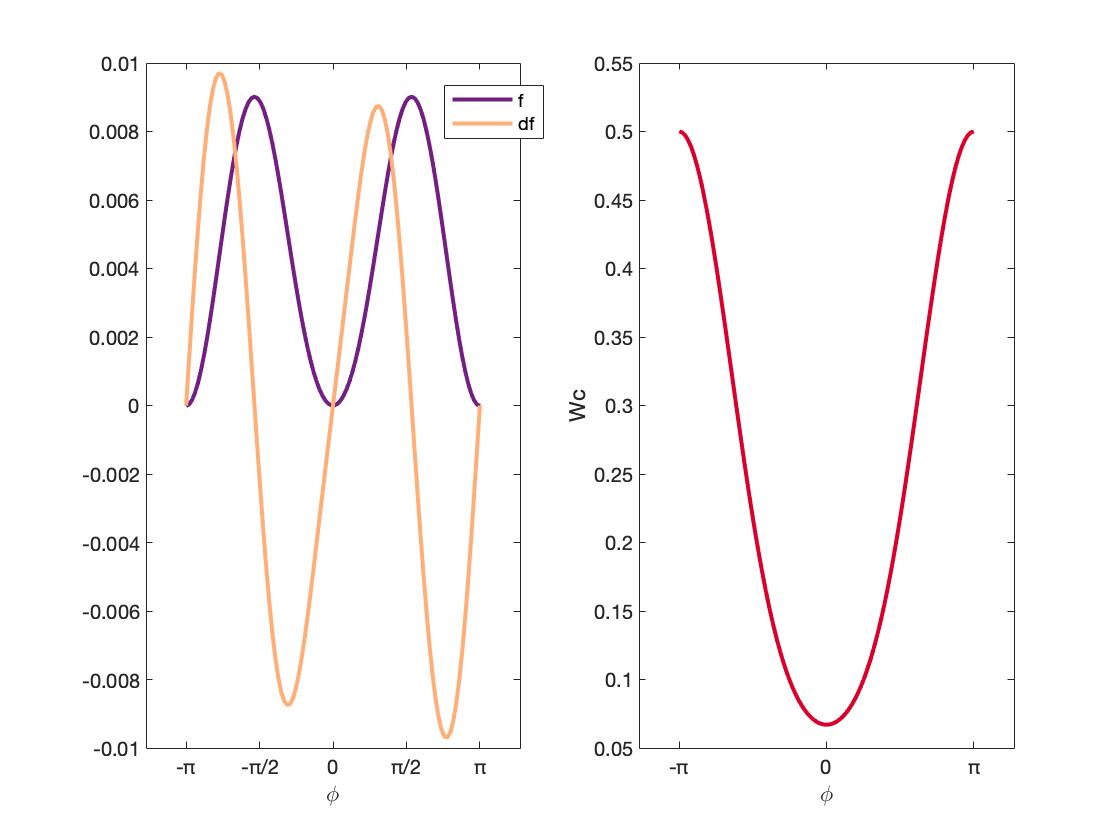}
    \end{minipage}
    }
    \subfigure[$\alpha_0=\frac{\pi}{9}$]{
    \begin{minipage}[b]{.3\linewidth}
    \centering
    \includegraphics[scale=0.13]{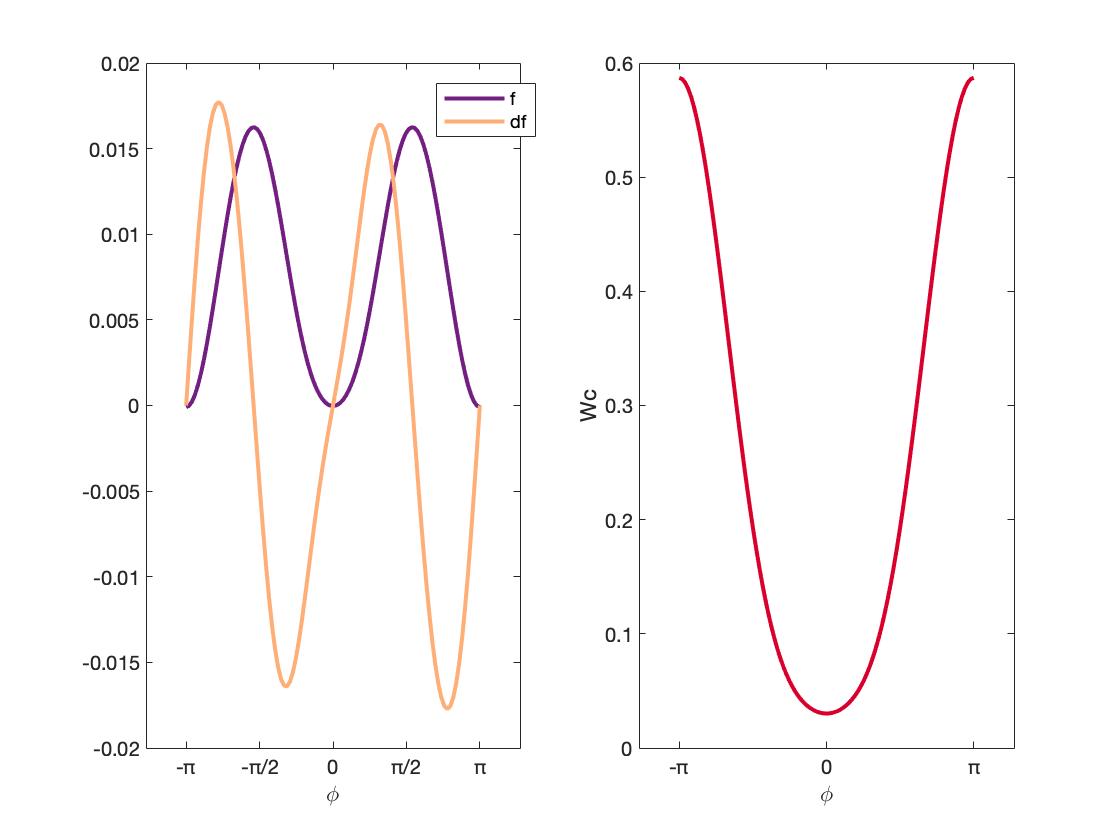}
    \end{minipage}
    }
    \subfigure[$\alpha_0=\frac{\pi}{6}$]{
    \begin{minipage}[b]{.3\linewidth}
    \centering
    \includegraphics[scale=0.13]{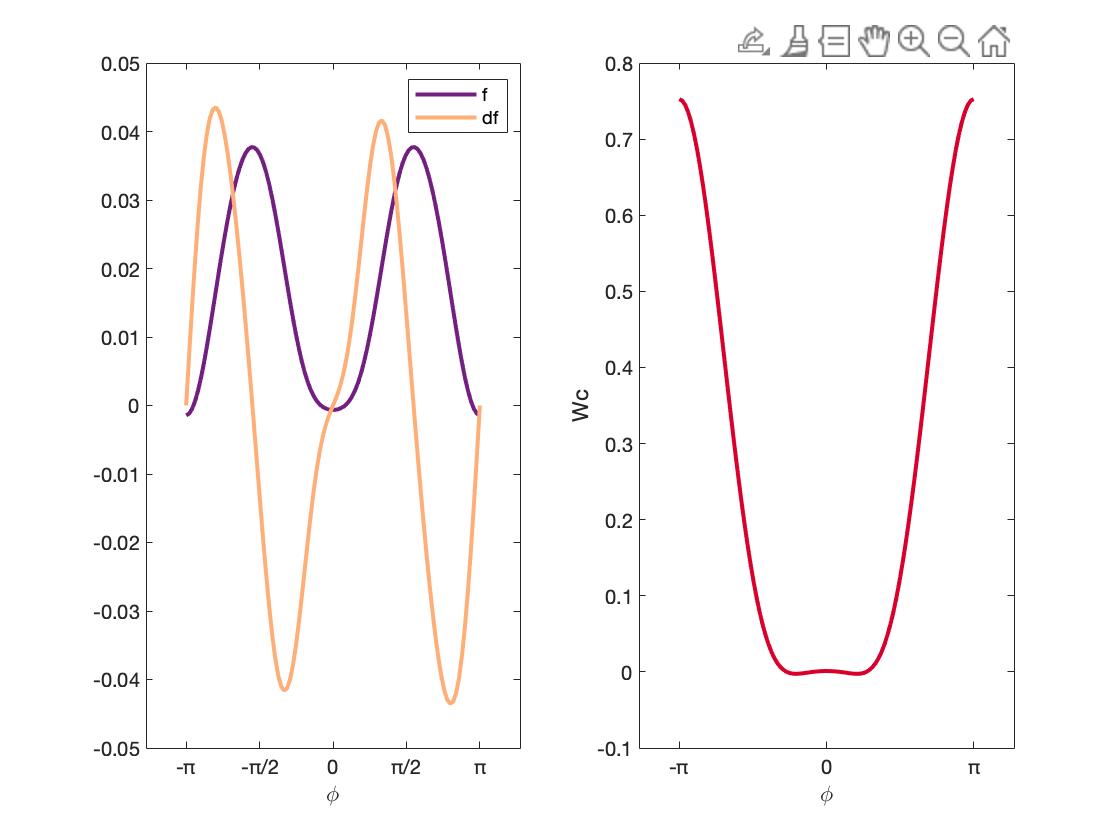}
    \end{minipage}
    }
    \caption{$f, \dot{f}$ and $W_C$ for  $N=5,~\theta_0=\frac{\pi}{6}$.}\label{fig2}
\end{figure}

\begin{figure}[h]
    \centering
    \subfigure[$\alpha_0=\frac{\pi}{36}$]{
    \begin{minipage}[b]{.3\linewidth}
    \centering
    \includegraphics[scale=0.13]{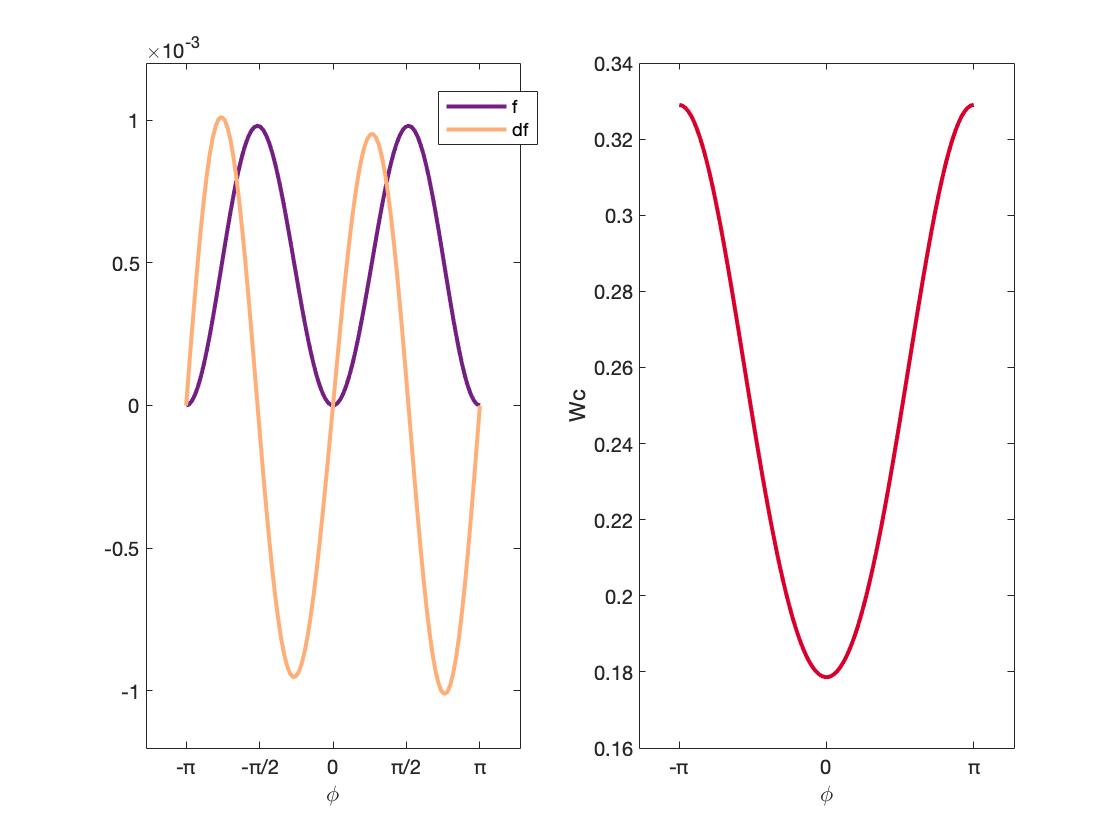}
    \end{minipage}
    }
    \subfigure[$\alpha_0=\frac{\pi}{24}$]{
    \begin{minipage}[b]{.3\linewidth}
    \centering
    \includegraphics[scale=0.13]{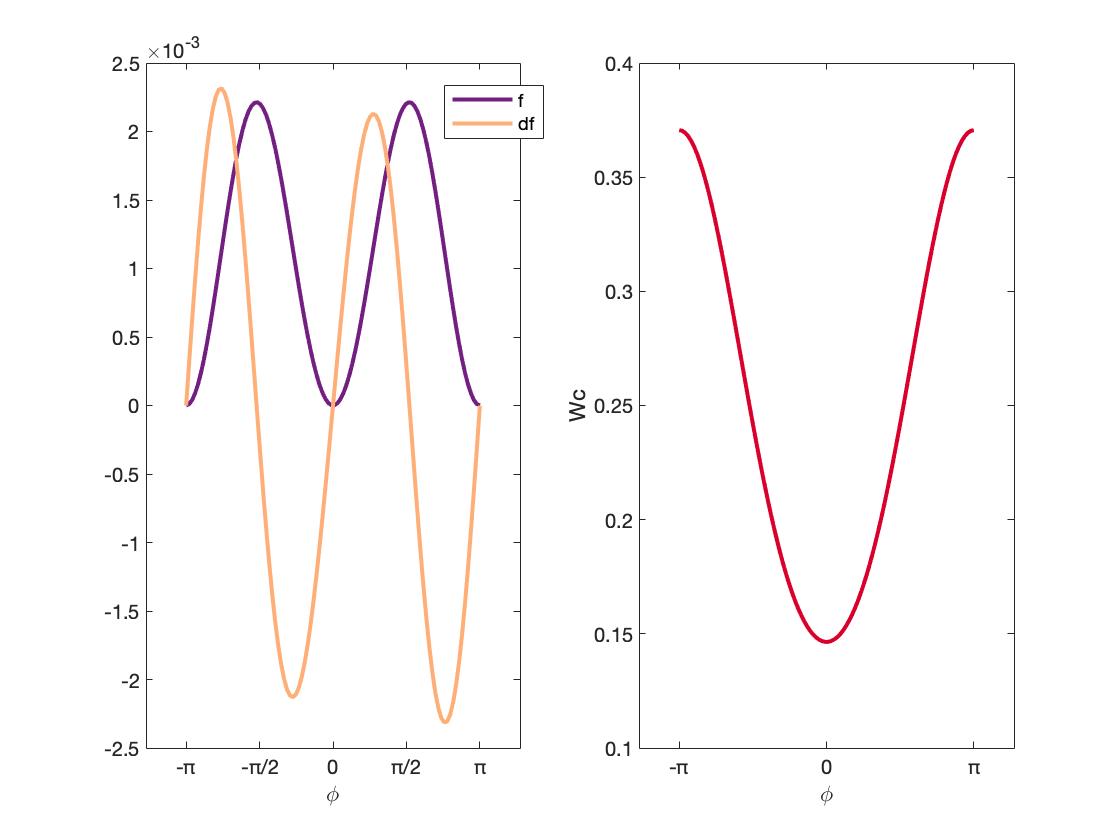}
    \end{minipage}
    }
    \subfigure[$\alpha_0=\frac{\pi}{18}$]{
    \begin{minipage}[b]{.3\linewidth}
    \centering
    \includegraphics[scale=0.13]{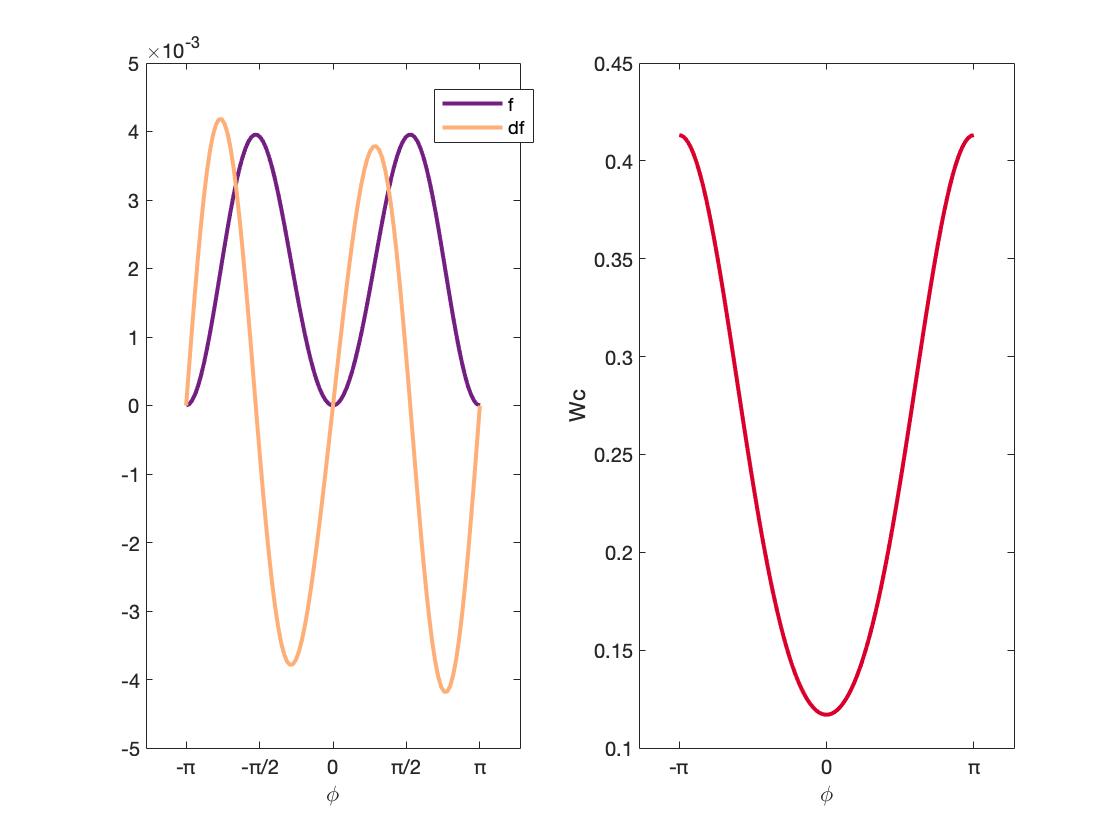}
    \end{minipage}
    }
    \subfigure[$\alpha_0=\frac{\pi}{12}$]{
    \begin{minipage}[b]{.3\linewidth}
    \centering
    \includegraphics[scale=0.13]{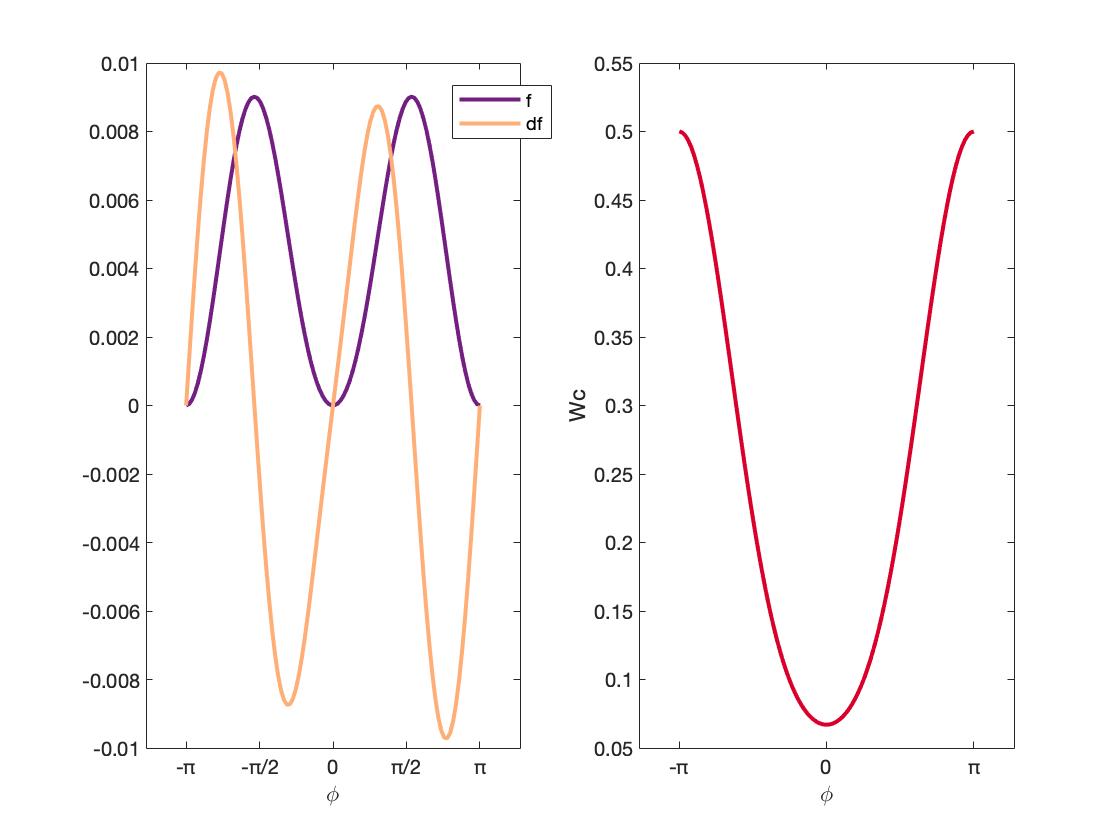}
    \end{minipage}
    }
    \subfigure[$\alpha_0=\frac{\pi}{9}$]{
    \begin{minipage}[b]{.3\linewidth}
    \centering
    \includegraphics[scale=0.13]{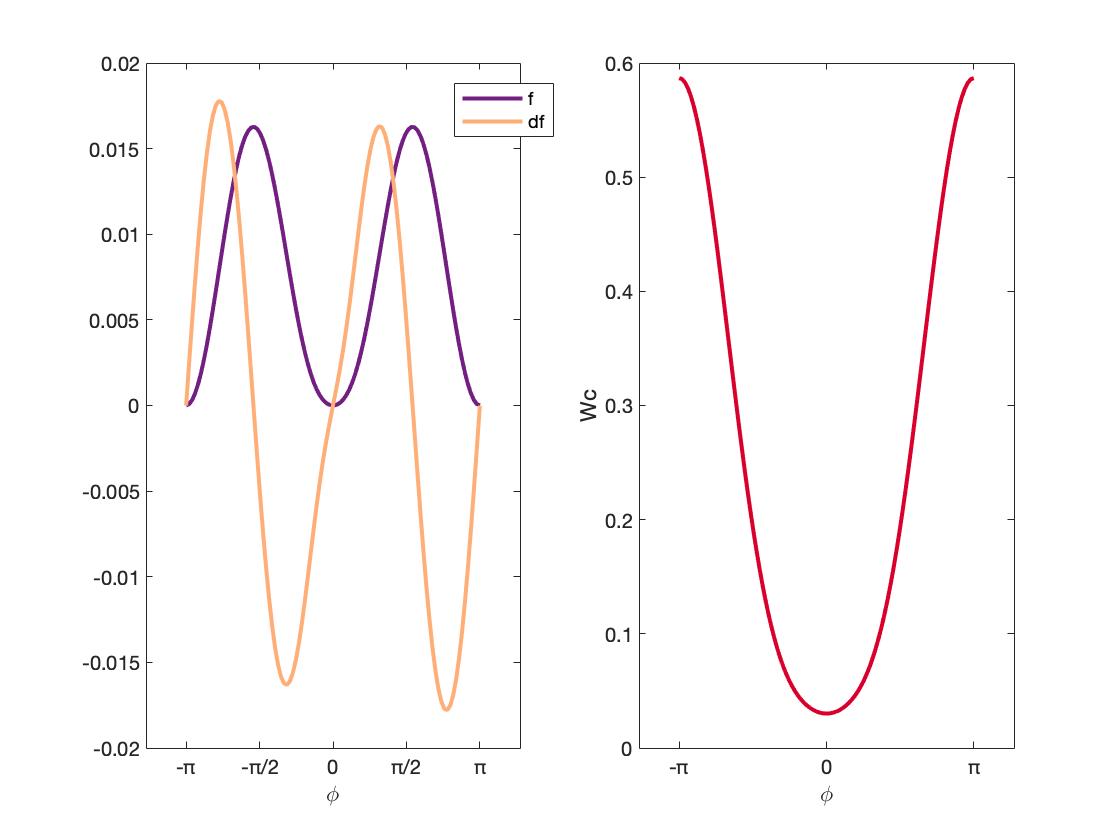}
    \end{minipage}
    }
    \subfigure[$\alpha_0=\frac{\pi}{6}$]{
    \begin{minipage}[b]{.3\linewidth}
    \centering
    \includegraphics[scale=0.13]{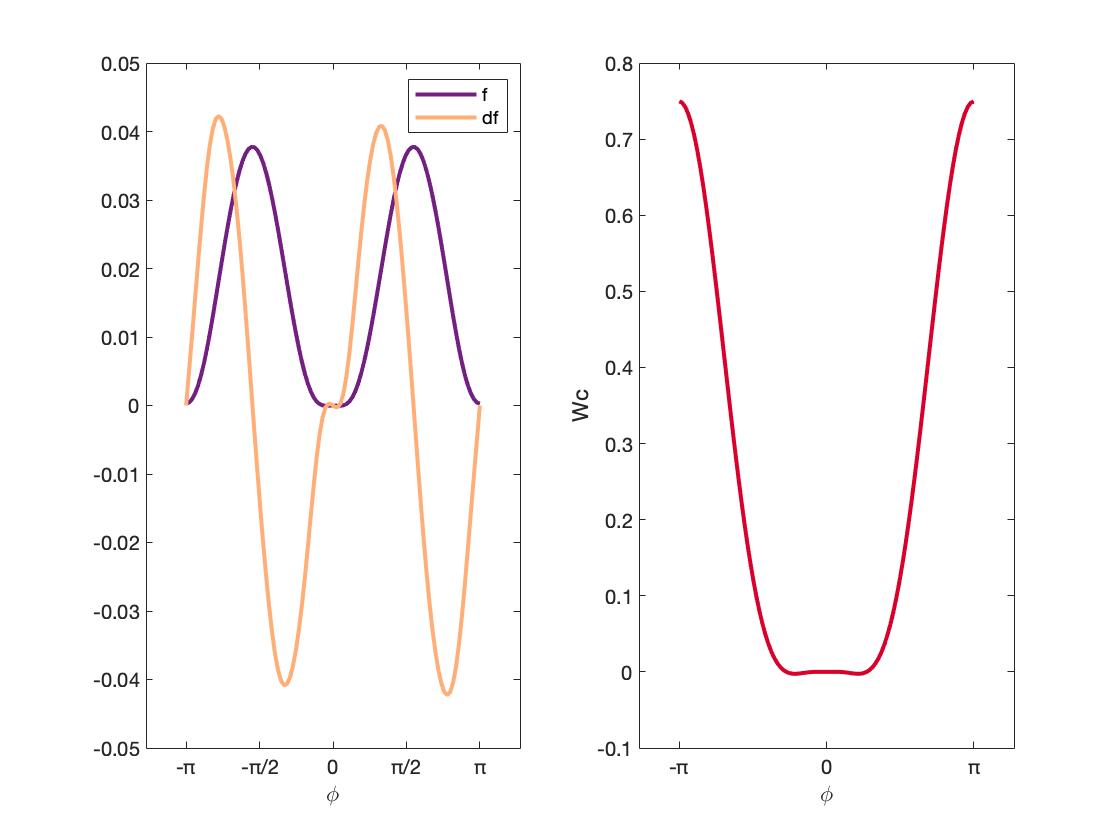}
    \end{minipage}
    }
    \caption{$f,\dot{f}$ and $W_C$ for $N=10,~\theta_0=\frac{\pi}{6}$.}\label{fig3}
\end{figure}

 We notice that as shown in \eqref{2.32}, $W_C(\phi)$ is influenced by the upstream data $\Tilde{p}(\rho_0, E_0)$. In these numerical experiments, for simplicity, we had assumed that $\Tilde{p}(\rho_0, E_0)=0$ (taking hypersonic limit for the polytropic gases or Chaplygin gas, i.e.,  $M_\infty=\infty$).
 
 In Figures \ref{fig2} and \ref{fig3}, it is seen that when $\alpha_0$ takes small values (say $\alpha_0\leq\frac{\pi}{9}$), $f$ is indeed a periodic function with $f\geq0,~f(-\pi)=f(0)=f(\pi)=0$, and $\dot{f}(-\pi)=\dot{f}(0)=\dot{f}(\pi)=0$. We also see that $W_C(\phi)>0$, which is a necessary condition for infinite-thin shock layer solution. However, when $\alpha_0$ is large, say $\alpha_0\geq\frac{\pi}{6}$, mistakes occur on $f,~\dot{f} $ and $W_C$ (see pictures (f) in Figures \ref{fig2} and \ref{fig3}), indicating that solutions with mass concentration on the surface is not suitable for large attack angles.

We observe from these two pictures that for the same attack angle $\alpha_0$, the curves for $N=5$ and $10$ are virtually identical, implying that only tiny error exists between diverse values of $N$. To demonstrate this, we put the curves for $N$ from $5$ to $10$ in one   figure and get Figure \ref{fig4} . We see that it looks only one curve for each $f,\dot{f}$ and $W_C$ appearing in the figure,  showing that the results for different $N$ nearly coincide with each other. To further analyze the error, see Section 4.4. 
\begin{figure}[htbp]
    \begin{minipage}[t]{0.5\linewidth}
        \centering
        \includegraphics[scale=0.19]{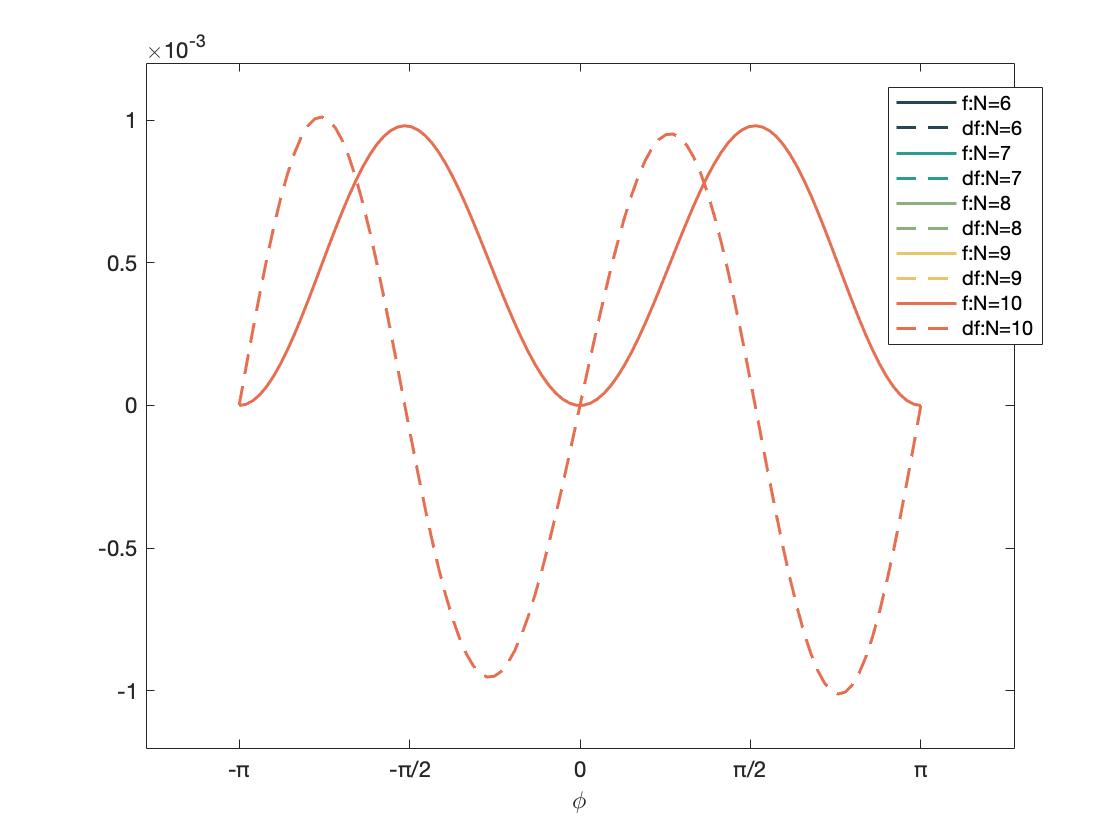}
        \centerline{(a) $f$ and $\dot{f}$ for different $N$.}
    \end{minipage}%
    \begin{minipage}[t]{0.5\linewidth}
        \centering
        \includegraphics[scale=0.19]{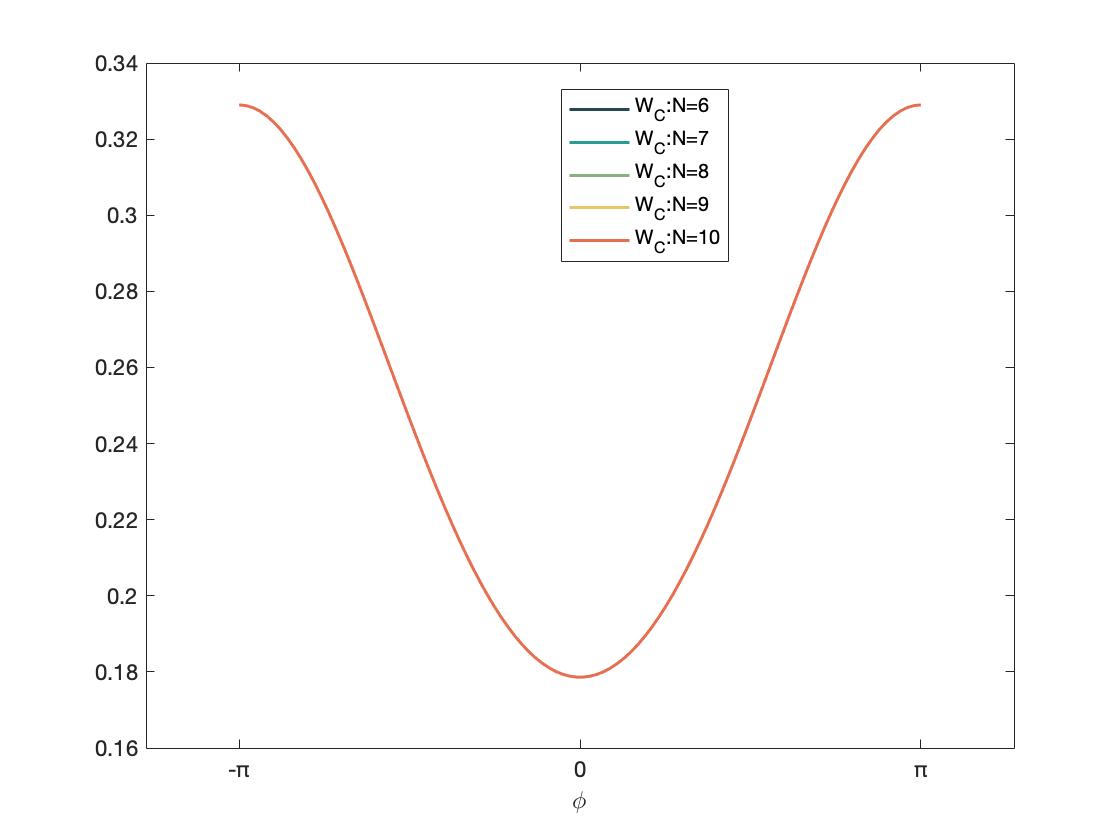}
        \centerline{(b) $W_C$ for different $N$.}
    \end{minipage}
    \caption{$f,\dot{f}$ and $W_C$ for different $N$ when $\theta_0=\frac{\pi}{6},~\alpha_0=\frac{\pi}{36}$.}\label{fig4}
\end{figure}

\begin{remark}
    In Figures \ref{fig2} and \ref{fig3}, we carefully observe that the peak value of $\dot{f}(\phi)$ on $[-\pi,-\frac{\pi}{2}]$ is a bit larger than that on $[0,~\frac{\pi}{2}]$. This is because the particles on the lower semi-cone (which is the windward side) are more affected by the upcoming flow than the upper semi-cone (leeward side), so the kinetic energy changes faster.
\end{remark}

\subsection{Error Estimates} Figure  \ref{fig5}   shows the error of system \eqref{3.6} with different truncation terms $N$.

\begin{figure}[htbp]
\begin{minipage}[t]{0.33\linewidth}
        \centering
        \includegraphics[scale=0.13]{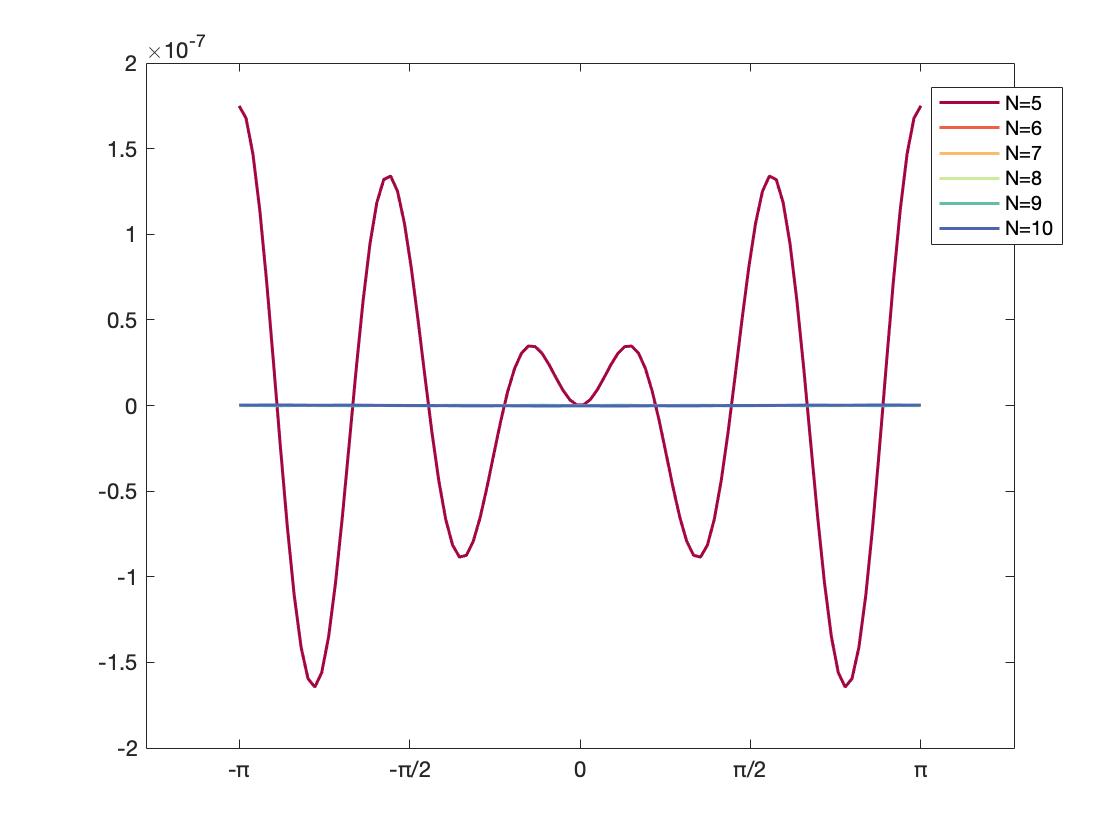}
        \centerline{(a) $E_N$ for $N=5$.}
    \end{minipage}%
    \begin{minipage}[t]{0.33\linewidth}
        \centering
        \includegraphics[scale=0.13]{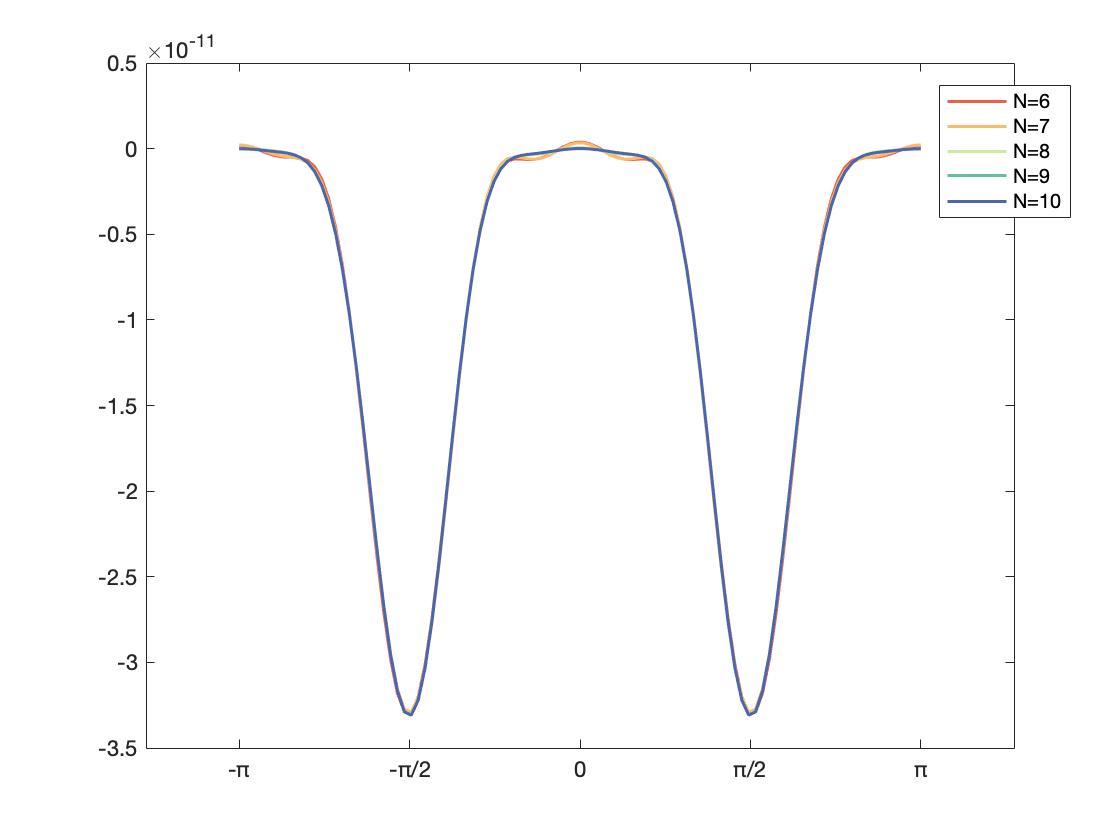}
        \centerline{(b) $E_N$ for  $N=6, 7, 8, 9, 10$.}
    \end{minipage}
     \begin{minipage}[t]{0.33\linewidth}
\centering
\includegraphics[scale=0.13]{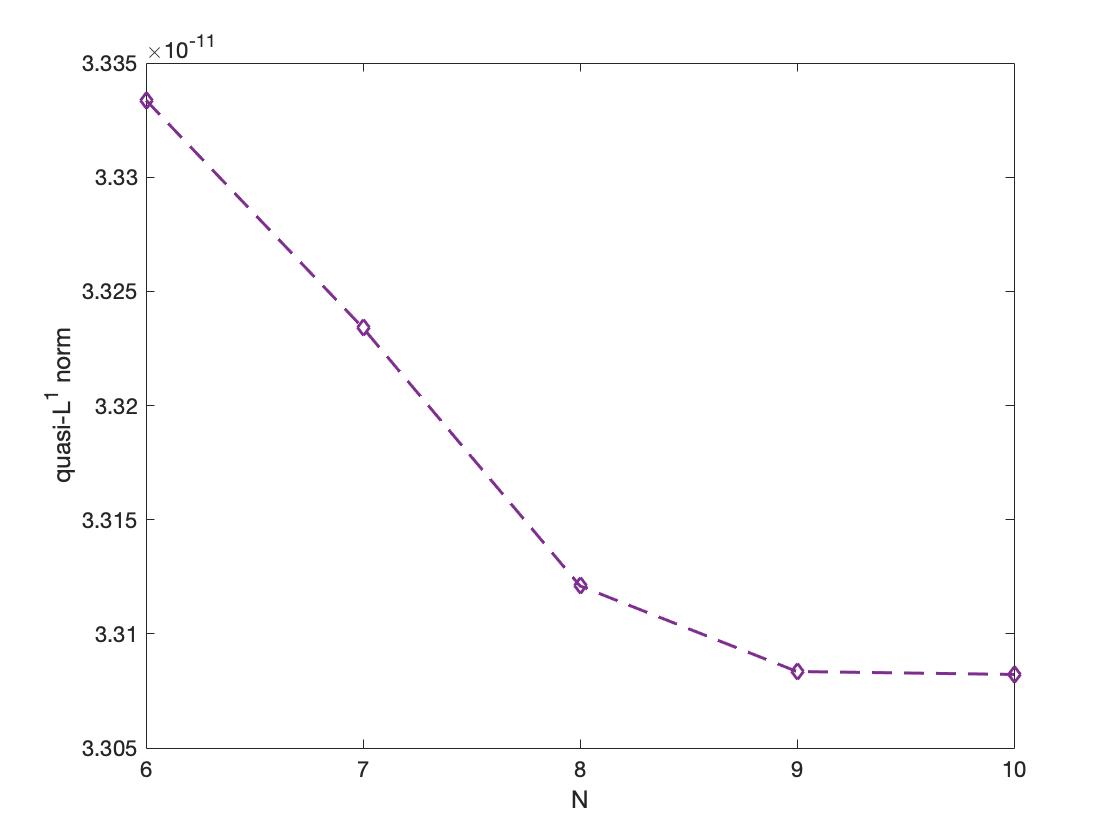}
\centerline{(c) Quasi-$L^1$ Norm of each $N$.}\label{fig7}
\end{minipage}
\caption{Error Estimates}\label{fig5}
\end{figure}
The error $E_N(\phi)$ is obtained by substituting $A_0,\ldots, A_N,~B_2,\dots,B_N,C_1,C_2,C_3,C_4$ into the left-hand side of \eqref{3.6} for each $N=5, 6, \ldots$ Figure \ref{fig5} (a) (b) shows the corresponding results for $N\geq5$, from which we observe that when $N=5$, the error is under $2\times10^{-7}$, while when $N\geq6$, the error is comparatively much smaller, fluctuates four orders smaller ($4\times10^{-11}$). For $N\ge6$ and up to $10$,  the curves almost coincide with each other, implying that there is not much difference between each such $N$.
 
    In order to see whether the coupling of Fourier spectral method and Newton's method is a good treatment to system \eqref{3.6}, we introduce a concept of \textbf{quasi-$L^1$ norm} of the error, defined as the sum of  $\max\{E_N(\phi)^+:~\phi\in[-\pi,\pi]\}$ and  $\max\{E_N(\phi)^-:~\phi\in[-\pi,\pi]\}$ with different $N$, where $E_N^+$ is the positive part of $E_N$ and $E_N^-$ is the negative part of $E_N$. The result is shown in Figure \ref{fig5} (c). We see that as $N$ takes larger value, the error becomes smaller, indicating a good astringency of our numerical method.

    In view of the fact that the order of error $(10^{-11})$ is sufficiently small compared to the order of $f~(10^{-4})$ , we will not distinguish the value of $N$ in the following series of numerical results.
 
\subsection{Results for various semi-vertex angles} Figure \ref{fig6} shows the results for different semi-vertex angles $\theta_0$ for one attack angle $\alpha_0$ when $\alpha_0=\pi/36$ and $\pi/18$. An obvious property of monotonicity is observed from the figure. For the same attack angle $\alpha_0$, $f(\phi)$ takes larger value when $\theta_0$ increases, and so is $W_C(\phi)$.

\begin{figure}[htbp]
    \begin{minipage}[t]{0.5\linewidth}
        \centering
        \includegraphics[scale=0.19]{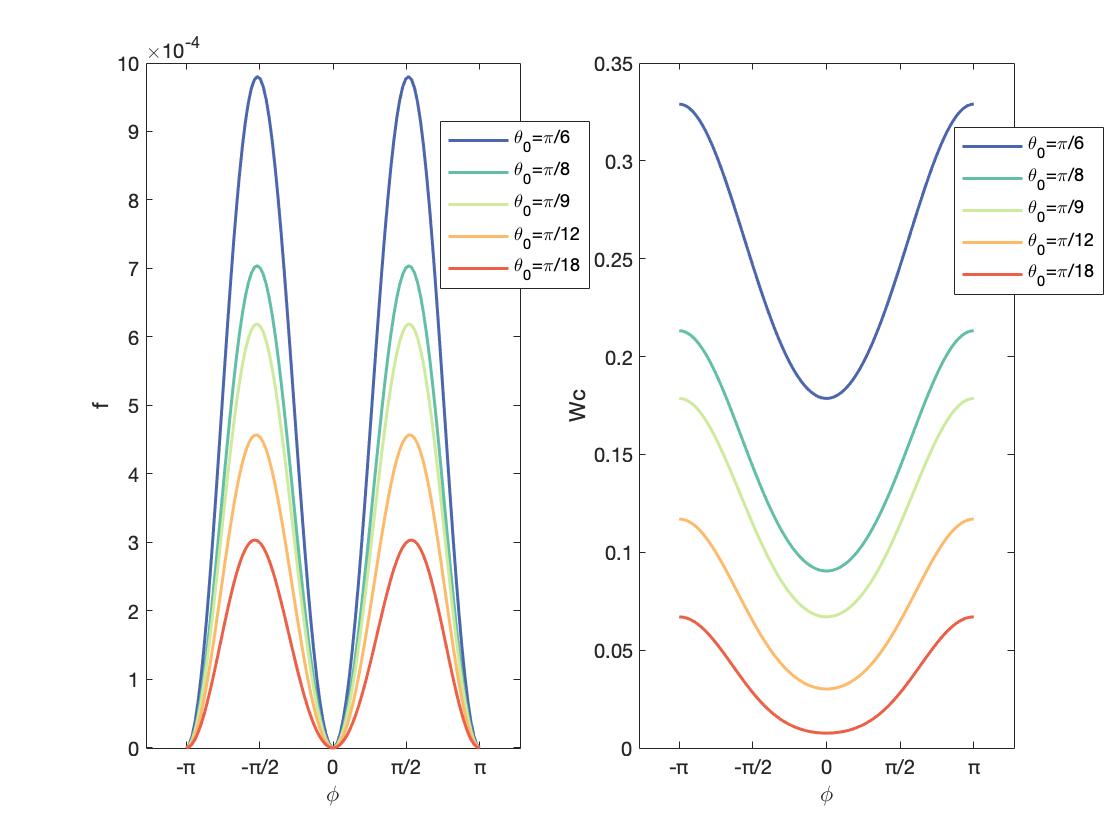}
        \centerline{(a) $\alpha_0=\pi/36$.}
    \end{minipage}%
    \begin{minipage}[t]{0.5\linewidth}
        \centering
        \includegraphics[scale=0.19]{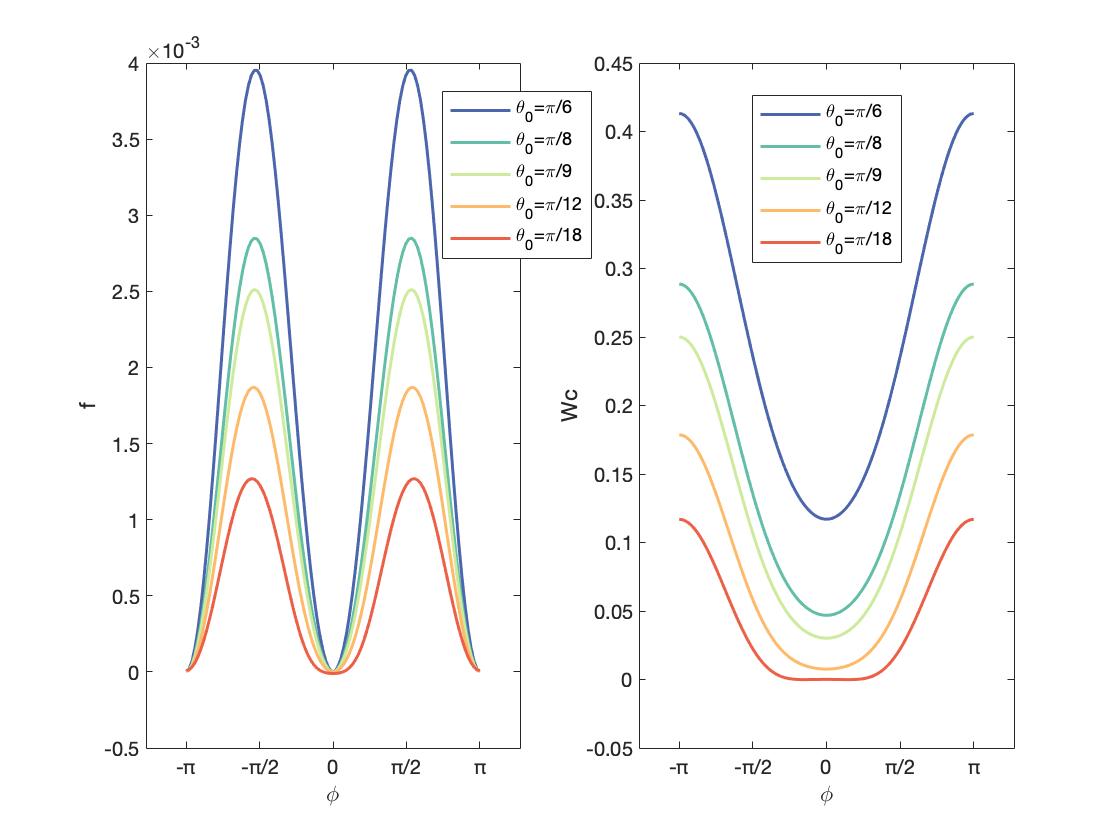}
        \centerline{(b) $\alpha_0=\pi/18$.}
    \end{minipage}
    \caption{Different semi-vertex angles $\theta_0$ for the same attack angle.}\label{fig6}
\end{figure}

 \subsection{Results for various attack angles and extremum for $W_C$.} Figure \ref{fig7} is the results of $f$ and $W_C$ for fixed $\theta_0=\frac{\pi}{6}$, and various $\alpha_0$. We also acquire   monotonicity for $f$. It is seen that $f(\phi)$ increases when $\alpha_0$ takes larger value. For $W_C$, however, $W_C(\phi)$ is monotonically increasing with respect to $\alpha_0$ when $\phi$ is near $\pm\pi$,  while it is  monotonically decreasing when $\phi$ is near $0$. To reveal the variation rule of $W_C$, we extract the extremum of $W_C$ with each $\alpha_0$ and get Figure \ref{fig8}.  We observed that as $\alpha_0$ increases, $\max W_C$ increases while $\min W_C$ decreases. It is compatible with the physical fact that as $\alpha_0$ increases (in a reasonable range), windward side ($\phi=\pm\pi$) bears more pressure while leeward side $(\phi=0)$ bears less.

\begin{figure}[htbp]
\begin{minipage}[t]{0.5\linewidth}
\centering
\includegraphics[scale=0.2]{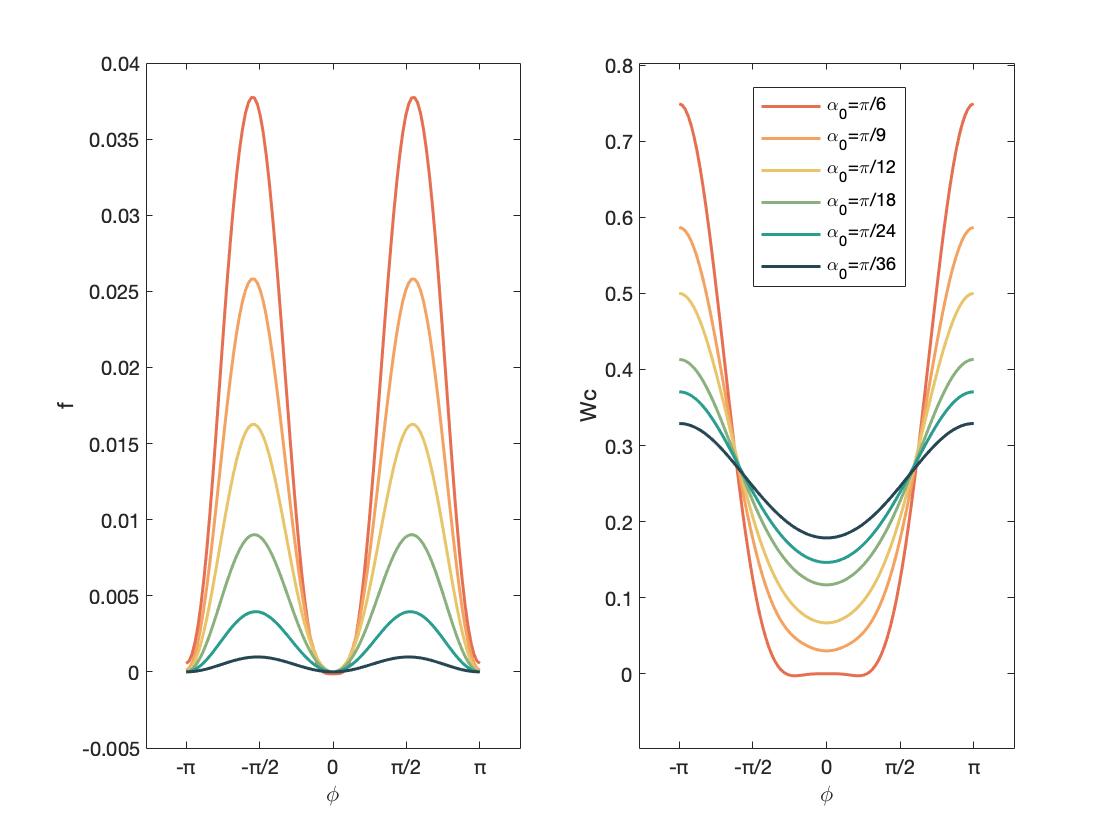}
\caption{Different attack angles $\alpha_0$ for $\theta_0=\frac{\pi}{6}.$}\label{fig7}
\end{minipage}%
\begin{minipage}[t]{0.5\linewidth}
\centering
\includegraphics[scale=0.2]{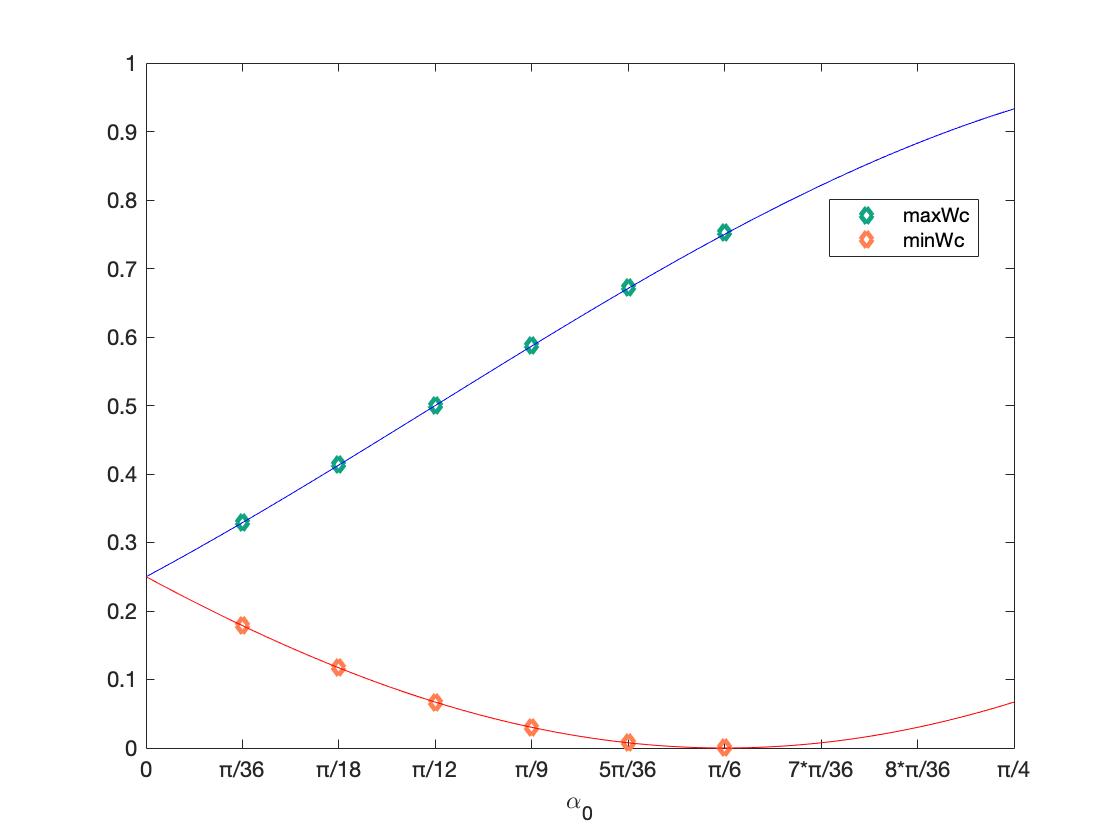}
\caption{Maximum and Minimum of $W_C$.}\label{fig8}
\end{minipage}
\end{figure}

\begin{remark}
    The blue curve in Figure \ref{fig8} is the graph of $\sin^2(\frac{\pi}{6}+\alpha_0)$ and the red curve is $~\sin^2(\frac{\pi}{6}-\alpha_0)$, where $\alpha_0$ is the variable. We see that $\max W_C$ and $\min W_C$ exactly fall on these two curves, and this is consistent with the theoretical result \eqref{2.32}.
\end{remark}
\subsection{Results for $u^t,~w$ and $w_\rho$} Using Fourier spectral method, 
$f$ and $\dot{f}$ is obtained numerically, and  $h$ is also known by \eqref{2.29}. Then by solving \eqref{2.28} with, for example,  Runge-Kutta method, we get the numerical results for $y$. Therefore,  according to \eqref{1}, $u^t,~w$ and $w_\rho$ can also be solved numerically ($u^t=f/y, ~w=h/y,~w_\rho=y^2/f$). In Figure \ref{fig9}, we show some results for different attack angle $\alpha_0$ with semi-vertex angle $\theta_0=\frac{\pi}{6}$. In (b) and (c), singularity arises at 
$\phi=0,~\pm\pi$ because $y=0$ at these three points. We attribute this error to the numerical method because $w$ and $w_\rho$   should be $C^1$ functions over $[-\pi, ~\pi]$.

\begin{figure}[htbp]
    \begin{minipage}[t]{0.33\linewidth}
        \centering
        \includegraphics[scale=0.15]{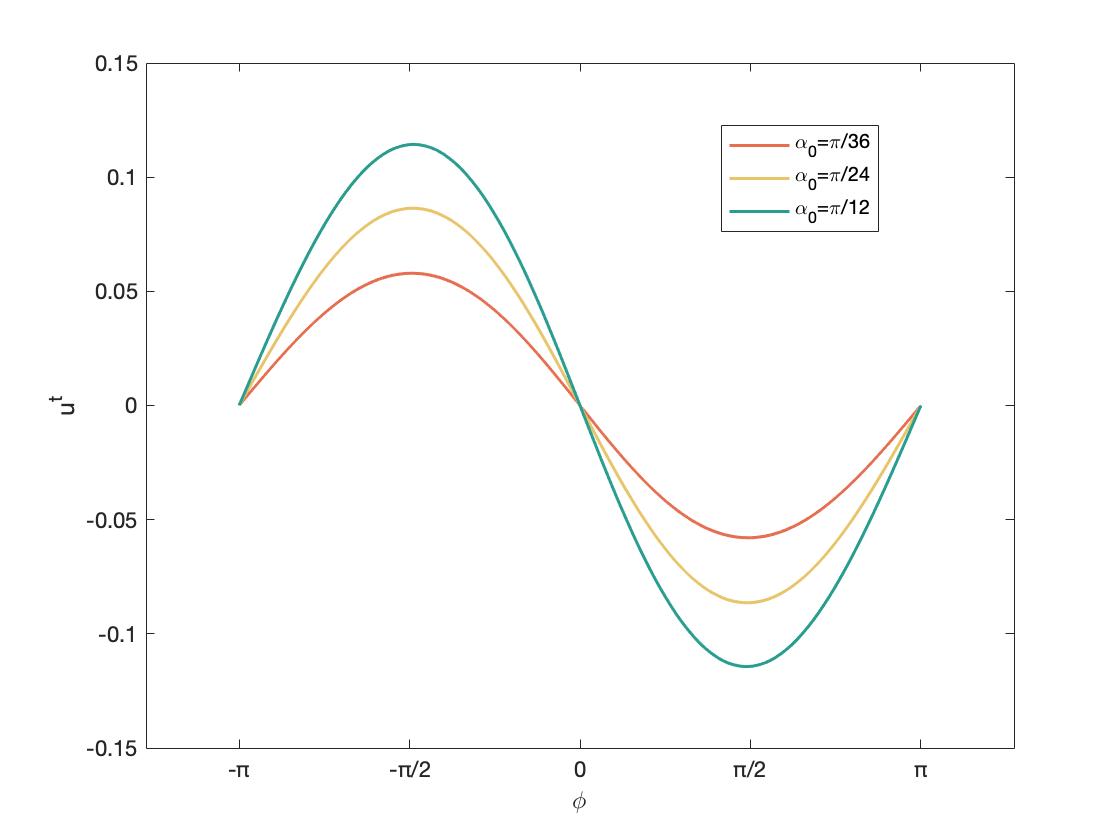}
        \centerline{(a) Some results for $u^t$.}
    \end{minipage}%
    \begin{minipage}[t]{0.33\linewidth}
        \centering
        \includegraphics[scale=0.15]{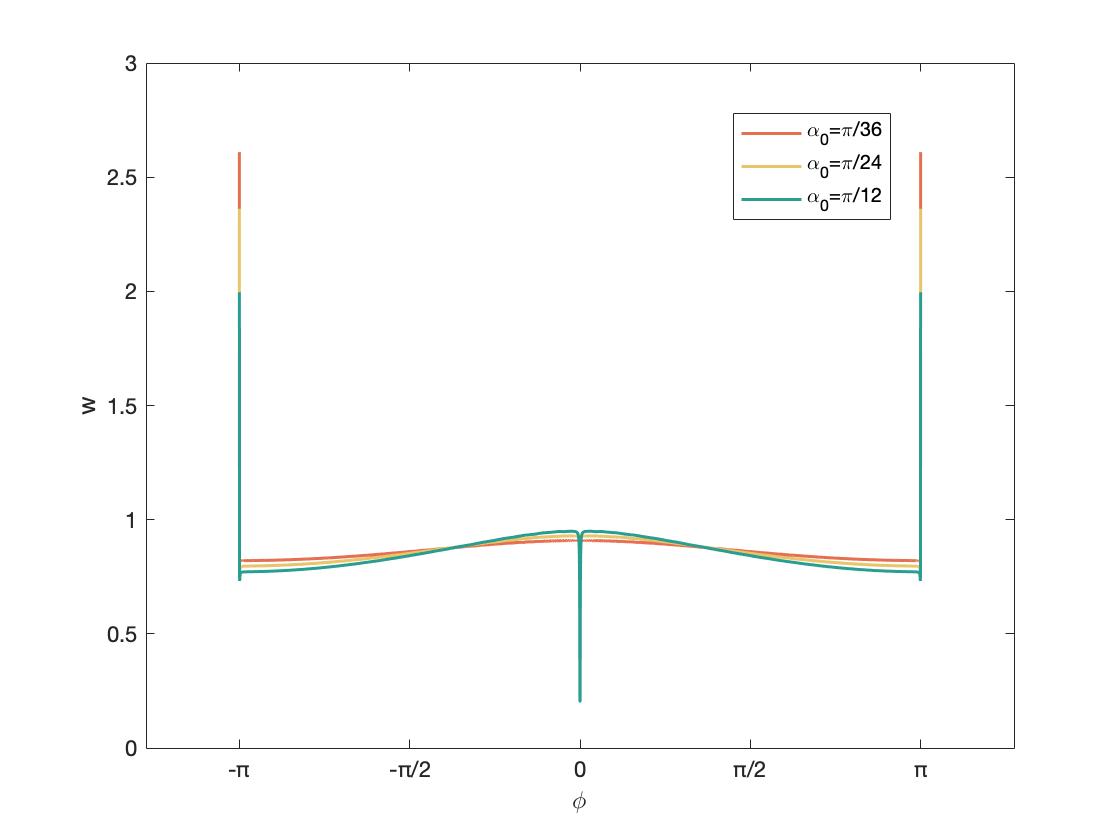}
        \centerline{(b) Some results for $w$.}
    \end{minipage}
    \begin{minipage}[t]{0.33\linewidth}
        \centering
        \includegraphics[scale=0.15]{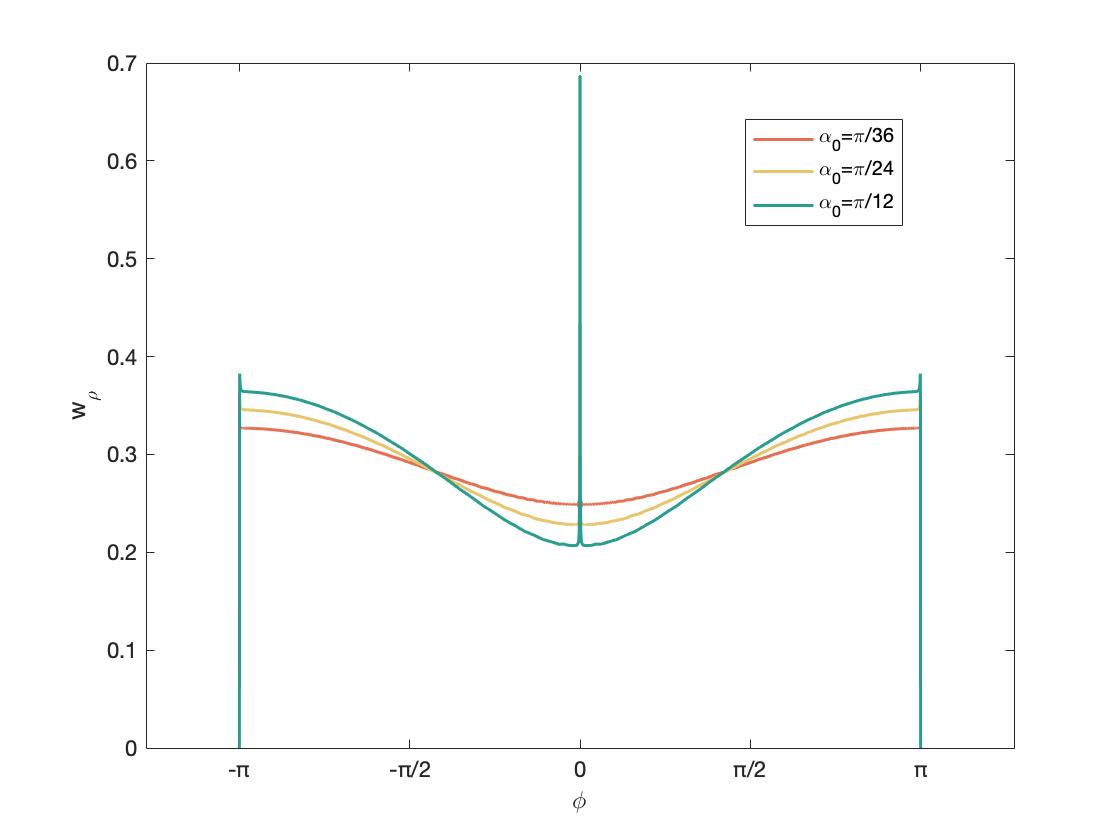}
        \centerline{(c) Some results for $w_\rho$.}
    \end{minipage}
    \caption{$u^t,~w$ and $w_\rho$ for $\theta_0=\frac{\pi}{6}$ and different $\alpha_0$.}\label{fig9}
\end{figure}
\begin{remark}
    Figure \ref{fig9} (a) indicates that $u^t$ is not equal to the component of the upcoming stream along $\mathbf{t}$, which, by \cite{1}, is $(u_0,\mathbf{t})=\sin\alpha_0\sin\phi$. This is because the particles are affected not only by the cone, but also by the constant upcoming flow hitting on the cone.
\end{remark}
\begin{remark}
     Figure \ref{fig9} (c) shows that $w_\rho$ is monotone with respect to $\alpha_0$ near $\phi=\pm\pi$ and $0$ respectively. When $\phi$ is near $\pm\pi$, $w_\rho(\phi)$ is monotonically increasing, since more particles (per unit square) will hit the cone as the attack angle increases. When $\phi$ is near $0$, $w_\rho(\phi)$ is monotonically decreasing, because the   angle between the upstream and the generatrix $\phi=0$ gets smaller as $\alpha_0$ increases, so there will be less particles hitting on this position.
\end{remark}
\subsection{Trajectories on the cone}
Suppose the trajectory of particles on the cone is given by $z=(\theta_0, \phi, r(\phi))$. For time variable $t$, there holds
\begin{equation*}
    \frac{\mathrm{d}z}{\mathrm{d}t}=\frac{\mathrm{d}z}{\mathrm{d}\phi}\frac{\mathrm{d}\phi}{\mathrm{d}t}=(\frac{\mathrm{d}\theta_0}{\mathrm{d}t},\frac{\mathrm{d}\phi}{\mathrm{d}t}, \frac{\mathrm{d}r(\phi)}{\mathrm{d}t})=(0,~u^t,~w),
\end{equation*}
therefore
\begin{equation*}
    \frac{\mathrm{d}z}{\mathrm{d}\phi}=(0,~1,~\frac{w}{u^t}).
\end{equation*}
So for some initial point $z_0=(\theta_0,\phi_0,r(\phi_0)),$ its trajectory on the cone is given by 
\begin{equation}
    z(\phi)=(\theta_0,~\phi,~ r(\phi_0)+\int_{\phi_0}^\phi \frac{w}{u^t}\mathrm{d}\phi).
\end{equation}
In Figure \ref{fig10}, we present three trajectories ($z^1$ in blue, $z^2$ in red, $z^3$ in yellow) with different initial positions from the west semi-cone (half cone containing $(0,0,-1)$), which is  $z_0^1=(\theta_0, -0.999\pi, 10),~z_0^2=(\theta_0, -\frac{3\pi}{4}, 10),~z_0^3=(\theta_0,-\frac{\pi}{2}, 10).$ Figure 10 (b) (c) (d) are views along $x^3$-axis, $x^2$-axis and $x^1$-axis respectively. We learn from the figures that the trajectories of particles with all initial positions, except for $\phi_0=-\pi$,  will gradually rotate on the cone and flow towards $\phi=0$. Due to symmetry along $x^1$-axis, so is the east semi-cone.
\begin{figure}[h]
    \centering
    \subfigure[ ]{
    \begin{minipage}[b]{.45\linewidth}
    \centering
    \includegraphics[scale=0.19]{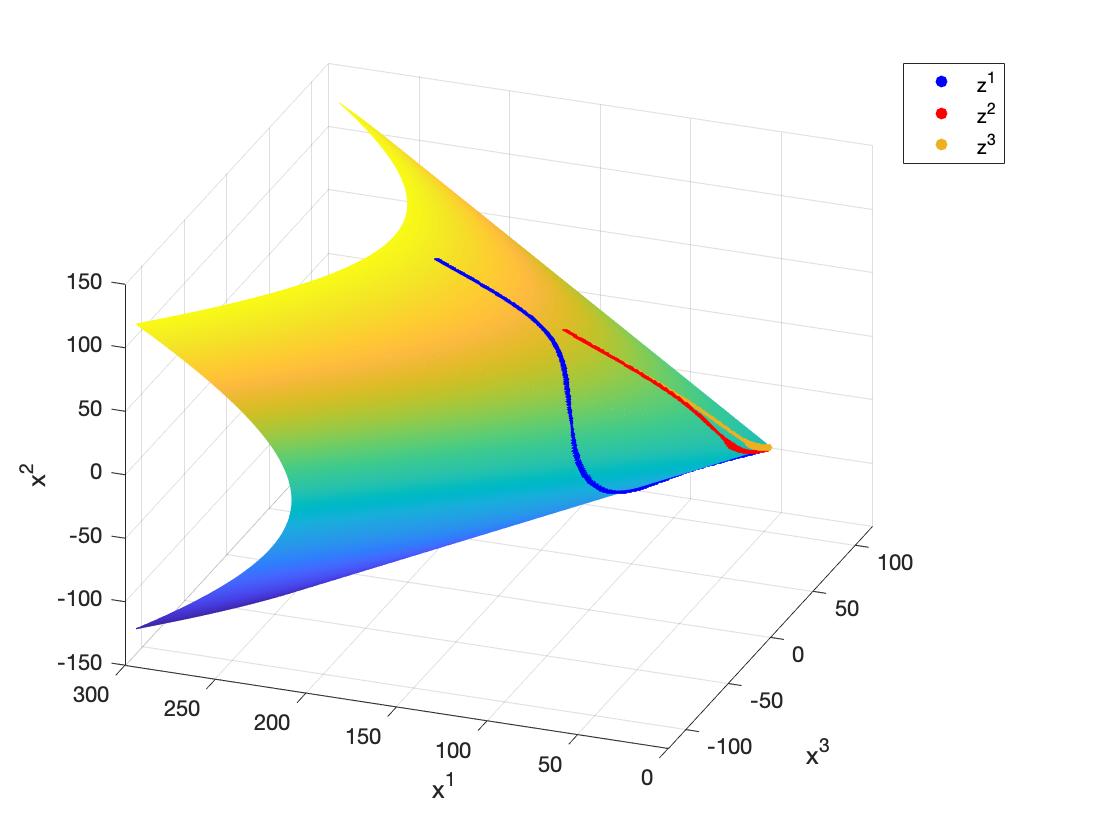}
    \end{minipage}
    }
    \subfigure[ ]{
    \begin{minipage}[b]{.45\linewidth}
    \centering
    \includegraphics[scale=0.19]{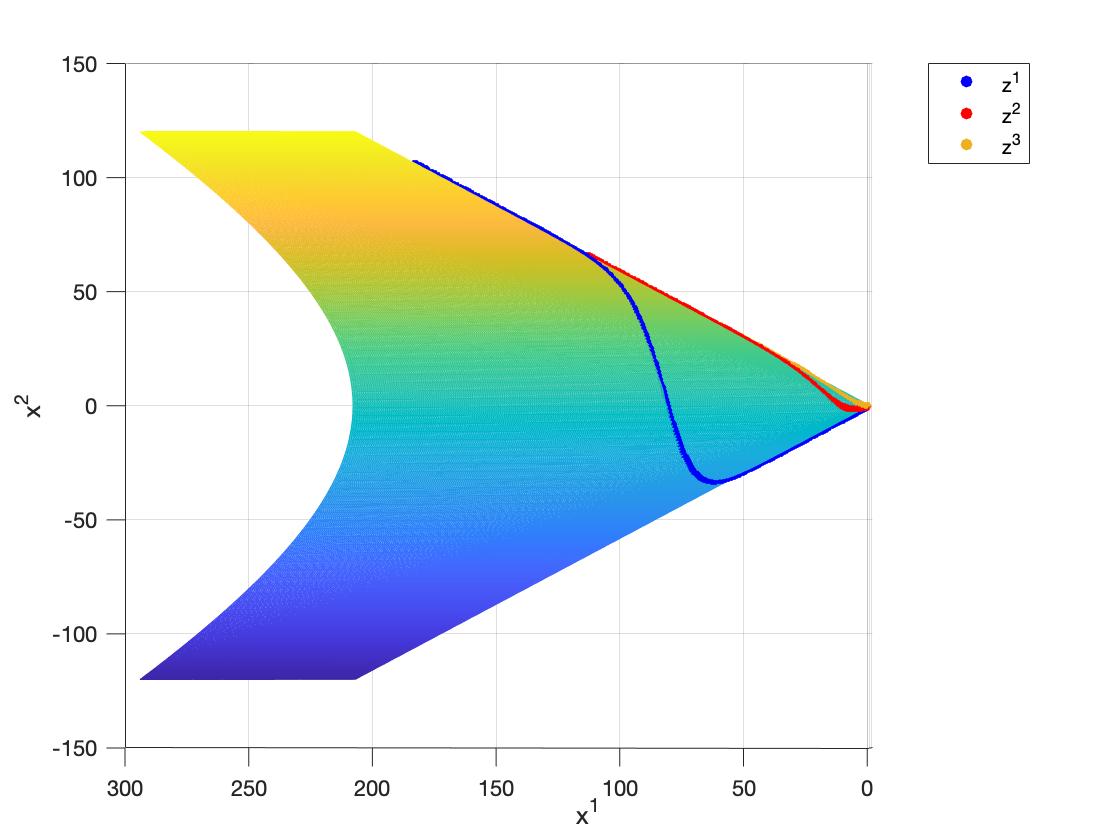}
    \end{minipage}
    }
    \subfigure[ ]{
    \begin{minipage}[b]{.45\linewidth}
    \centering
    \includegraphics[scale=0.19]{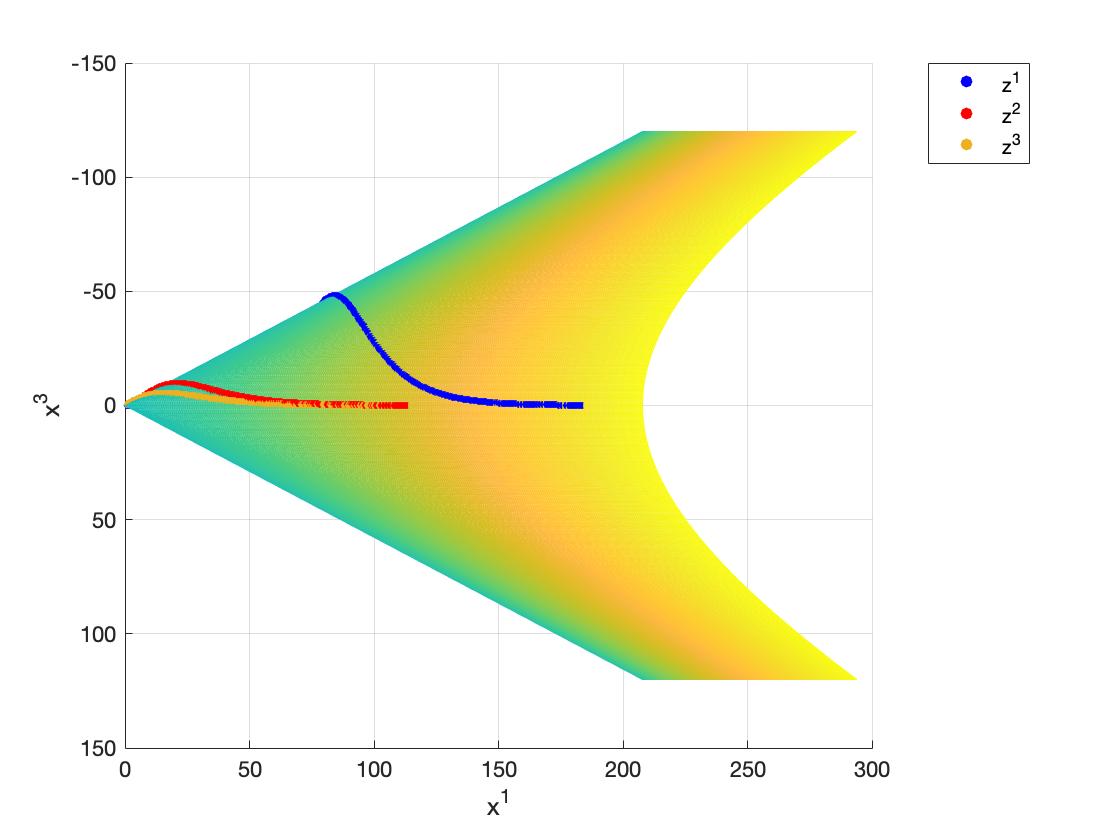}
    \end{minipage}
    }
    \subfigure[ ]{
    \begin{minipage}[b]{.45\linewidth}
    \centering
    \includegraphics[scale=0.19]{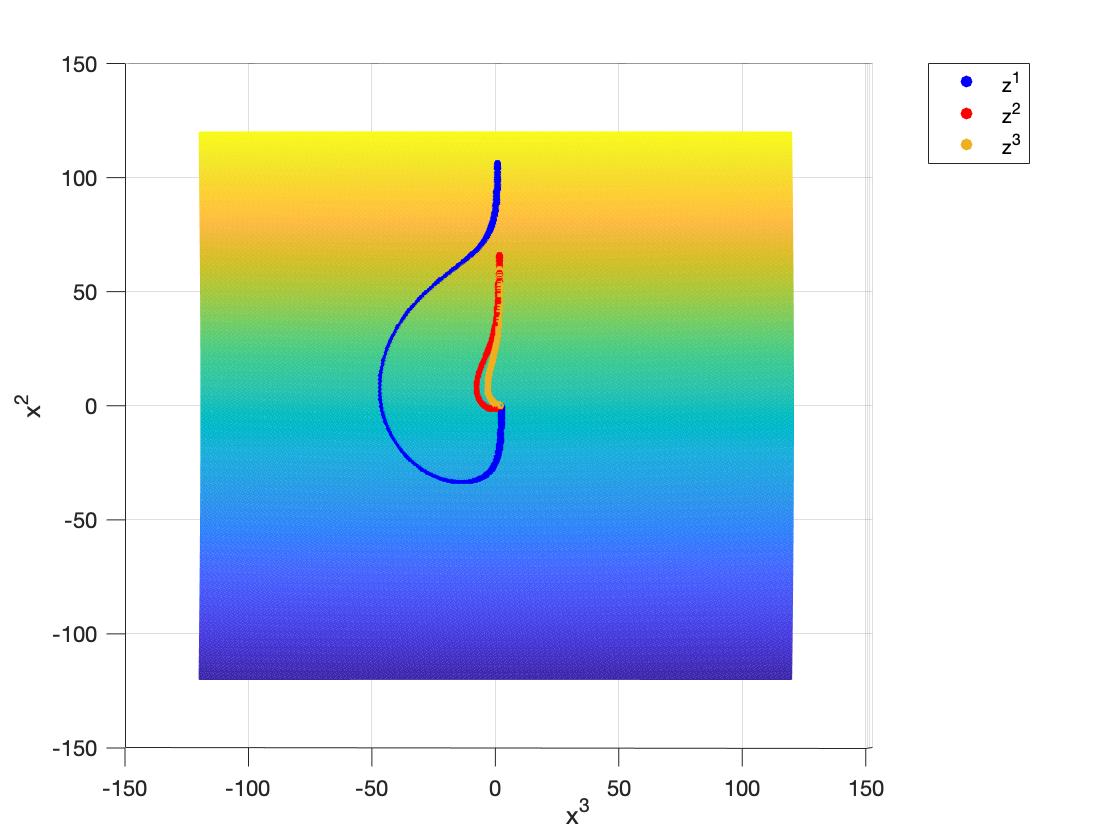}
    \end{minipage}
    }
    
    \caption{Trajectories on the cone with $\theta_0=\frac{\pi}{6},~\alpha_0=\frac{\pi}{36}$.}\label{fig10}
\end{figure}

\section*{Acknowledgements} This work is supported by the National Natural Science Foundation of China under Grants
No. 11871218, No. 12071298, and  in part by Science and Technology Commission of Shanghai Municipality (No. 21JC1402500 and No. 22DZ2229014).


\begin{thebibliography}{3}
        

           \bibitem{4} Anderson, John D. Jr.: \emph{Modern Compressible flow with historical perspective}, Third edition, McGraw-Hill Education, 2003.

        \bibitem{3} Chen, S. and Li, D.: \emph{Conical shock waves in supersonic flow}, Journal of Differential Equations, 269 (2020) 595-611.

           \bibitem{8} Collatz, L.: \emph{The Numerical Treatment of Differential Equations}, Springer-Verlag, Berlin, 1960.

        \bibitem{6} Courant, R. and  Friedrichs, K.O.:~\emph{Supersonic flow and shock waves},~Interscience Publishers,~1948.
            
        \bibitem{5} Coutsias, Evangelos A.,  Hagstrom, Thomas and  Torres, David:  \emph{An efficient spectral method for ordinary differential equations with rational function coefficients}, Math. Comp., 65(1996), 611-635.

        \bibitem{2} Cui, D. and Yin, H.: \emph{Global supersonic conic shock wave for the steady supersonic flow past a cone: polytropic gas,} Journal of Differential Equations, 246 (2) (2009) 641-669.


           
           

        \bibitem{1} Qu, A. and Yuan, H.: \emph{Radon measure solutions for steady compressible Euler equations of hypersonic-limit conical flows and Newton's sine-squared law}, Journal of Differential Equations, 269 (2020) 495-522.
      
        \bibitem{7}  Ruban, Anatoly I. and  Gajjar, Jitesh S. B.: \emph{Fluid Dynamics,~Part 1:~Classical fluid dynamics}, First edition, Oxford University Press, 2014.
     
      
       

    
\end{thebibliography}
\end{document}